\DeclareMathOperator{\Hess}{Hess}
\DeclareMathOperator{\Range}{Range}
\DeclareMathOperator{\grad}{grad}
\newcommand{\TheTitle}{Data-Driven Polynomial Ridge Approximation Using Variable Projection}
\newcommand{\TheAuthors}{Jeffrey M. Hokanson and Paul G. Constantine}
\title{{\TheTitle}\thanks{Submitted to the editors 20 February 2017.
\funding{This work is supported by Department of Defense, 
Defense Advanced Research Project Agency's program Enabling Quantification of Uncertainty in Physical Systems. 
The second author's work is partly supported by the U.S. Department of Energy Office of Science, 
Office of Advanced Scientific Computing Research, Applied Mathematics program under Award Number DE-SC-0011077. 
}}}
\author{Jeffrey M. Hokanson\thanks{
	Department of Computer Science, 
	University of Colorado Boulder,
	1111 Engineering Dr, Boulder, CO 80309,
	\{\email{Jeffrey.Hokanson},\email{Paul.Constantine}\}\email{@colorado.edu}}
	\and Paul G. Constantine\footnotemark[2]
}
\begin{document}
\maketitle

\begin{abstract}
Inexpensive surrogates are useful for reducing the cost of science and engineering studies 
involving large-scale, complex computational models with many input parameters. 
A ridge approximation is one class of surrogate 
that models a quantity of interest
as a nonlinear function of a few linear combinations of the input parameters.
When used in parameter studies (e.g., optimization or uncertainty quantification),
ridge approximations allow the low-dimensional structure to be exploited,
reducing the effective dimension.
We introduce a new, fast algorithm for constructing a 
ridge approximation where the nonlinear function is a polynomial.
This polynomial ridge approximation is chosen to minimize least squared mismatch between 
the surrogate and the quantity of interest on a given set of inputs.
Naively, this would require optimizing both the polynomial coefficients and 
the linear combination of weights; the latter of which define a low-dimensional subspace of the input space.
However, given a fixed subspace the optimal polynomial can be found by solving a linear least-squares problem.
Hence using \emph{variable projection} the polynomial can be implicitly defined,
leaving an optimization problem over the subspace alone.
Here we develop an algorithm that finds this polynomial ridge approximation
by minimizing over the Grassmann manifold of low-dimensional subspaces
using a Gauss-Newton method.
%We provide details of this optimization algorithm
%and demonstrate its performance on several numerical examples. 
Our Gauss-Newton method has superior theoretical guarantees and faster convergence 
on our numerical examples than the alternating approach for polynomial ridge approximation earlier proposed by
Constantine, Eftekhari, Hokanson, and Ward
[{\tt https://doi.org/10.1016/j.cma.2017.07.038}]
that alternates between (i) optimizing the polynomial coefficients given the subspace and 
(ii) optimizing the subspace given the coefficients.
\end{abstract}

\begin{keywords}
active subspaces, emulator, Grassmann manifold, response surface, ridge function, variable projection
\end{keywords}
\begin{AMS}
	49M15, % Calculus of variations and optimal control; optimization  	Newton-type methods
	62J02, %  	General nonlinear regression
	90C53 % mathematical programming,   	Methods of quasi-Newton type
\end{AMS}

\section{Introduction}
Many problems in uncertainty quantification~\cite{Smith2013,Sullivan2015} and design~\cite{SW10,Wang2006} 
involve a scalar quantity of interest derived from a complex model's output that depends on the model inputs.
Here we denote the map from model inputs to the quantity of interest as $f:\set D \subseteq \R^m\to\R$. 
In many cases evaluating the quantity of interest is too expensive to permit the number of evaluations 
needed for the design or uncertainty study---e.g., estimating the distribution of $f$ given a probability distribution for $\ve x \in \set D$ 
or optimizing $f$ over design variables $\ve x \in \set D$. 
One approach to reduce the parameter study's cost is to construct an inexpensive surrogate, 
also known as a \emph{response surface}~\cite{Jones2001,Myers1995} or an \emph{emulator}~\cite{Santner2003}, 
using pairs of inputs $\lbrace \ve x_i\rbrace_{i=1}^M \subset \set D$ 
and outputs $\lbrace f(\ve x_i)\rbrace_{i=1}^M \subset \R$. 
In high dimensions (i.e., when $m$ is large), 
building this surrogate is challenging since the number of parameters in the surrogate often grows exponentially in the input dimension;
for example, a polynomial surrogate of total degree $p$ has $\order(p^m)$ parameters (polynomial coefficients) as $m\to\infty$.
In such cases, an exponential number of samples are required to yield a well-posed problem for constructing the surrogate. 
One workaround is to use surrogate models where the number of parameters grows more slowly; 
however, to justify such low-dimensional models, 
we must assume the function being approximated admits the related low-dimensional structure. 
Here we assume that the quantity of interest $f$ varies primarily along $n<m$ directions in its $m$-dimensional input space---%
that is, $f$ has an $n$-dimensional \emph{active subspace}~\cite{Con15}. 
Such structure has been demonstrated in quantities of interest arising a wide range of computational science applications: 
integrated hydrologic models~\cite{jefferson2015active,Jefferson2017}, 
a solar cell circuit model~\cite{constantine2015discovering}, 
a subsurface permeability model~\cite{Gilbert2016}, 
a lithium-ion battery model~\cite{constantine2016time}, 
a magnetohydrodynamics power generation model~\cite{glaws2016dimension}, 
a hypersonic scramjet model~\cite{constantine2015exploiting}, 
an annular combustor model~\cite{bauerheim2016}, 
models of turbomachinery~\cite{Seshadri2017}, 
satellite system models~\cite{Hu2016}, 
in-host HIV models~\cite{Loudon2017}, 
and computational models of aerospace vehicles~\cite{Lukaczyk2015,lukaczyk2014active} and automobiles~\cite{othmer2016}.

\begin{figure}
\centering
\hspace*{0.25em}
\begin{tikzpicture}
\begin{groupplot}[
	group style = {group size = 2 by 1, horizontal sep=4.5em},
	width = 6.5cm,
	height = 4.5cm,
	ylabel = $f(\ve x)$,
	ymin = 0, ymax = 2.2,
	clip mode=individual,
	title style = {yshift = -1ex},
	]
	\nextgroupplot[xlabel=$x_{1}$, title = {Coordinate shadow plot}, xmin = -1, xmax = 1]
	\addplot[black, only marks, mark size=0.5] table [x=X1, y=f] {fig_example.dat}; 

	\nextgroupplot[xlabel=$\ma U^\trans \ve x$, title = {Subspace shadow plot}, xmin = -2, xmax = 2]
	\addplot[black, only marks, mark size=0.5] table [x=UX, y=f] {fig_example.dat};
	\addplot[colorbrewerA1, very thick] table [x=UX, y=fit] {fig_example_fit.dat};
\end{groupplot}
\end{tikzpicture} \\[-6pt]
\begin{tikzpicture}
	\begin{axis}[
		width = 13cm,
		height = 3.5cm,
		xlabel = parameter $i$,
		ylabel = {$[\ma U]_i$},
		ymax = 0.3,
		ymin = -0.3,
		xmin = 0, 
		xmax = 101, 
		xtick = {1,5,10, ..., 100},
		ytick = {-0.3, -0.2, -0.1, 0, 0.1, 0.2, 0.3},
		title = {Projection weights},
		title style = {yshift = -1ex},
		]
		\addplot[black, only marks, mark size = 1] table [x=i, y=U_fit] {fig_example_U.dat};
		\addplot[black, only marks, mark size = 2, mark=o] table [x=i, y=U] {fig_example_U.dat};
	\end{axis}
\end{tikzpicture}

\caption{
	Ridge approximations reveal structure not present in coordinate perspectives.
	In this toy example, $f: \set D :=[-1,1]^{100}\to \R$
	with $f(\ve x) = |\hve u^\trans \ve x| + 0.1(\sin(1000[\ve x]_2) + 1)$
	where $\hve u$ has been sampled uniformly on the unit sphere
	and the sine term simulates deterministic noise.
	Viewed from a coordinate perspective on the upper left, no structure is present.
	However, after fitting a ridge function to $N=1000$ samples of $f$
	with polynomial degree $p=7$ and subspace dimension $n=1$ using our \cref{alg:main},
	the structure of $f$ is revealed by looking along the recovered subspace $\ma U$.
	On the bottom, we see that the coefficients of $\ma U$ in the ridge approximation, $\bullet$, 
	closely match the coefficients of $\hve u$, $\circ$.
}
\label{fig:example}
\end{figure}

For functions with this low-dimensional structure
an appropriate surrogate model is a \emph{ridge function}~\cite{pinkus2015}:
the composition of a linear map from $\R^m$ to $\R^n$ with a nonlinear function $g$ of $n$ variables:
\begin{equation}\label{eq:ridge}
	f(\ve x) \approx g(\ma U^\trans \ve x), 
		\quad \text{where} \quad 
	\ma U\in \R^{m\times n} \text{ and } \ma U^\trans \ma U = \ma I.
\end{equation}
In this paper we consider \emph{polynomial ridge approximation}~\cite{CEHW17}
where $g$ is a multivariate polynomial of total degree $p$
and construct an efficient algorithm for \emph{data-driven polynomial ridge approximation}
that chooses $g$ and $\ma U$ to minimize the 2-norm misfit on a training set $\{(\ve x_i, f(\ve x_i))\}_{i=1}^M$:
\begin{equation}\label{eq:pra_opt}
	\minimize_{\substack{g \in \mathbb{P}^p(\R^n) \\
				 \Range \ma U \in \mathbb{G}(n, \R^m)}}
		\quad
		\sum_{i=1}^M \left[ f(\ve x_i) - g(\ma U^\trans \ve x_i)\right]^2,
\end{equation}
where $\mathbb{P}^p(\R^n)$ denotes the set of polynomials on $\R^n$ of total degree $p$
and $\mathbb{G}(n,\R^m)$ denotes the Grassmann manifold of $n$ dimensional subspaces of $\R^m$.
\Cref{fig:example} provides an example of this approximation on a toy problem.
By exploiting ridge structure, fewer samples of the quantity of interest
are required to make this approximation problem overdetermined.
For example, whereas a polynomial of total degree $p$ on $\R^m$ requires ${m+p \choose p}$ samples, 
a polynomial ridge approximation of total degree $p$ on a $n$-dimensional subspace
requires only ${n + p \choose p} + mn$.

In the remainder of this paper we develop an efficient algorithm for solving the 
least squares polynomial ridge approximation problem~\cref{eq:pra_opt} by exploiting its inherent structure. 
To begin, we first review the existing literature from the applied math and statistics communities on ridge functions
in \cref{sec:background}.
Then to start building our algorithm for polynomial ridge approximation,
we show how \emph{variable projection}~\cite{GP73} can be used to 
implicitly construct the polynomial approximation given a subspace defined by the range of $\ma U$
in \cref{sec:reformulate}.
We further address the numerical issues inherent in polynomial approximation by using a Legendre basis 
and employing shifting and scaling of the projected coordinates $\ma U^\trans \ve x_i$. 
Then, with an optimization problem posed over the subspace spanned by $\ma U$ alone, 
we use techniques for optimization on the Grassmann manifold developed by Edelman, Arias, and Smith~\cite{EAS98}. 
Due to the structure of the Jacobian, 
we are able to develop an efficient Gauss-Newton algorithm as described in \cref{sec:algorithm}. 
We compare the performance of our data-driven polynomial ridge approximation algorithm 
to the alternating approach described by Constantine et al.~\cite[Alg.~2]{CEHW17}. 
Their algorithm constructs the ridge approximation by alternating between minimizing polynomial 
given a fixed subspace and then minimizing the subspace given a fixed polynomial. 
Our algorithm provides improved performance due the faster convergence of Gauss-Newton-like algorithms 
and a more careful implementation that exploits the structure of the ridge approximation problem 
during the subspace optimization step. 
The improved performance can aid model selection studies---e.g., cross-validation---that find the best parameters 
of the approximation, such as the subspace dimension $n$ and the polynomial degree $p$; 
addressing this issue is beyond the scope of the present work. 
In \cref{sec:examples} we provide examples comparing the Gauss-Newton method 
to the alternating approach on toy problems and then further demonstrate the effectiveness of our algorithm 
by constructing polynomial ridge approximations of two $f$'s from application problems: 
an 18-dimensional airfoil model~\cite[\S5.3.1]{Con15} and a 100-dimensional elliptic PDE problem~\cite{CDW14}. 
For completeness, we compare the performance of the polynomial ridge approximation 
to alternative surrogate models (Gaussian processes and sparse polynomial approximations) 
using a testing set of samples from the physics-based models. 
However, we emphasize that our goal is not necessarily to claim that ridge approximation is 
always superior to alternative models---only that ridge approximation is appropriate for functions 
that vary primarily along a handful of directions in their domain.

\section{Related ideas and literature\label{sec:background}}
Approximating multivariate functions by ridge functions 
has been studied under various names by different applied mathematics and statistics subcommunities. 
Pinkus' monograph~\cite{pinkus2015} surveys the approximation theory related to ridge functions. 
Hastie et al.~\cite[Chapter 3.5]{ESL2009} review the general idea of building regression models 
on a few derived variables that are linear combinations of the predictors and discuss options 
for choosing the linear combination weights, e.g., principal components or partial least squares. 
In what follows, we review related ideas across the literature that we are aware of; 
some exposition mirrors the review in Constantine et al.~\cite{CEHW17}. 
Although our nonlinear least squares approach may benefit from ideas embedded in these approaches,
their precise application is outside the scope of this paper.

\subsection{Projection pursuit regression}
In the context of statistical regression, 
Friedman and Stuetzle~\cite{Friedman1981} proposed \emph{projection pursuit regression} 
which models the quantity of interest as a sum of one-dimensional ridge functions:
%with a ridge function model of the link function:
\begin{equation}
	\label{eq:ppr}
	y_i \;=\; \sum_{k=1}^r g_k(\ve u_k^T\ve x_i) + \varepsilon_i, \qquad \ve u_k \in \R^m
\end{equation}
where $\ve x_i$'s are samples of the predictors, $y_i$'s are the associated responses, 
and $\varepsilon_i$'s model random noise---all standard elements of statistical regression~\cite{Weisberg2005}. 
The $g_k$'s are smooth univariate functions (e.g., splines), 
and the $\ve u_k$'s are the directions of the ridge approximation. 
To fit the projection pursuit regression model, one minimizes the mean-squared error over the directions $\{\ve u_k\}$ 
and the parameters of $\{g_k\}$. 
Motivated by the projection pursuit regression model, Diaconis and Shahshahani~\cite{Diaconis1984} 
studied the approximation properties of nonlinear functions ($g_k$ in \eqref{eq:ppr}) 
of linear combinations of the variables ($\ve u_k^\trans\ve x$ in \eqref{eq:ppr}). 
Huber~\cite{Huber1985} surveyed a wide class of projection pursuit approaches across an array of multivariate problems; 
by his terminology, \emph{ridge approximation} could be called \emph{projection pursuit approximation}. 
Chapter 11 of Hastie et al.~\cite{ESL2009} links projection pursuit regression to neural networks, 
which uses ridge functions with particular choices for the $g_k$'s (e.g., the sigmoid function). 
Although the optimization problem may be similar, 
the statistical regression context is different from the approximation context, 
since there is no inherent randomness (e.g., $\varepsilon_i$ in \cref{eq:ppr}) in the approximation problem. 

\subsection{Sufficient dimension reduction}
In the context of statistical regression
there is also a vast body of literature devises methods for finding a low-dimensional linear subspace 
of the predictor space that is statistically sufficient to characterize the predictor/response relationship; 
see the well-known text by Cook~\cite{Cook1998} and the more modern review by Adragni and Cook~\cite{Adragni2009}. 
From this literature, the minimum average variance estimate (MAVE) method~\cite{Xia2002} 
uses an optimization formulation similar to \eqref{eq:pra_opt} to identify the dimension reduction subspace. 
However, the space of functions for ridge approximation in this approach is 
local linear models---as opposed to a global polynomial of degree $p$---where locality is imposed by kernel-based 
weights in the objective function centered around each data point. 
A similar approach was used in Xia's multiple-index model for regression~\cite{Xia2008}.

\subsection{Gaussian processes with low-rank correlation models}
In Gaussian process regression~\cite{gpml2006}, 
the conditional mean of the Gaussian process model given data (e.g., $\{y_i\}$ as in \eqref{eq:ppr}) 
is the model's prediction. 
This conditional mean is a linear combination of radial basis functions with centers at a set of points $\{\ve x_i\}$, 
where the form of the basis function is related to the Gaussian process' assumed correlation. 
Vivarelli and Williams~\cite{vivarelli99} proposed a correlation model of the form
\begin{equation}
	C(\ve x,\ve x') \;\propto\;
	\exp\left[
	-\frac{1}{2}(\ve x-\ve x')^\trans\ma U\ma U^\trans(\ve x-\ve x')
	\right],
\end{equation}
where $\ma U$ is a tall matrix. 
In effect, the resulting conditional mean is a function of linear combinations of the predictors, 
$\ma U^\trans \ve x$---i.e., a ridge function. 
A maximum likelihood estimate of $\ma U$ is the minimizer of an optimization problem similar to \eqref{eq:pra_opt}. 
Bilionis et al.~\cite{bilionis2016gaussian}, use a related approach from a Bayesian perspective 
in the context of uncertainty quantification, where the subspace defined by $\ma U$ enables powerful dimension reduction. 
And Liu and Guillas~\cite{Liu2017} develop a related low-dimensional model with linear combinations of predictors 
for Gaussian processes; 
their approach leverages gradient-based dimension reduction proposed by Fukumizu and Leng~\cite{Fukumizu2014} 
to find the linear combination weights.

\subsection{Ridge function recovery}
Recent work in constructive approximation seeks to recover the parameters of a ridge function 
from point queries~\cite{Fornasier2012,cohen2012,Tyagi2014};
that is, determine $\ma U$ in the ridge function $f(\ve x)=g(\ma U^\trans \ve x)$ using pairs $\{\ve x_i,f(\ve x_i)\}$.
Algorithms for identifying $\ma U$ (e.g., Algorithm 2 in~\cite{Fornasier2012}) are quite different 
than optimizing a ridge approximation over $\ma U$. 
However, the recovery problem is similar in spirit to the ridge approximation problem. 

\section{Separable reformulation\label{sec:reformulate}}
The key to efficiently constructing a data-driven polynomial ridge approximation
is exploiting structure to reduce the effective number of parameters.
As stated in~\cref{eq:pra_opt}, this approximation problem requires minimizing
over two sets of variables: the polynomial $g$ and the subspace spanned by $\ma U$.
Defining $g$ through its expansion in a basis $\lbrace \psi_j \rbrace_{j=1}^N$
of polynomials of total degree $p$ on $\R^n$, $\mathbb{P}^p(\R^n)$, 
we write $g$ as the sum
\begin{equation}
	g(\ve y) := \sum_{j=1}^N c_j \psi_j(\ve y), \qquad \ve y \in \R^n, \qquad N := {n + p \choose p}
\end{equation}
where the coefficients $c_j$ are entries of a vector $\ve c\in \R^N$ specifying the polynomial.
Using this expeansion, evaluating the ridge approximation at the inputs $\lbrace \ve x_i\rbrace_{i=1}^M$ 
is equivalent to the product of a Vandermonde-like matrix $\ma V(\ma U)$ and the coefficients~$\ve c$:
\begin{equation}\label{eq:defV}
	g(\ma U^\trans \ve x_i) = [\ma V(\ma U)\ve c]_i, \qquad
		[\ma V(\ma U)]_{i,j} := \psi_j(\ma U^\trans \ve x_i),
		\qquad \ma V: \R^{m\times n} \to \R^{M \times N};
\end{equation}
where here $[\ma A]_{i,j}$ denotes the $i$th row and $j$th column of $\ma A$.
The matrix-vector product~\cref{eq:defV} allows us to restate 
the polynomial ridge approximation problem~\cref{eq:pra_opt} in terms of coefficients $\ve c\in \R^N$
rather than the polynomial $g \in \mathbb{P}^p(\R^n)$:
\begin{equation}\label{eq:min_equiv}
	\underset{\substack{g \in \mathbb{P}^p(\R^n) \\
				 \Range \ma U \in \mathbb{G}(n, \R^m)}}{\operatorname{minimize}}
		\ 
		\sum_{i=1}^M \left[ f(\ve x_i) - g(\ma U^\trans \ve x_i)\right]^2
		\quad \Longleftrightarrow
		\underset{\substack{\ve c\in \R^{N} \\ \Range \ma U \in \mathbb{G}(n,\R^m)}}{\operatorname{minimize}}
			\left\|
				\ve f - \ma V(\ma U)\ve c
			\right\|_2^2,
		\ \ 
\end{equation}
where $\ve f\in \R^M$ holds the values of $f$ at $\ve x_i$, $[\ve f]_i := f(\ve x_i)$.
This new formulation reveals that the polynomial ridge approximation problem is a 
\emph{separable nonlinear least squares problem}, as for a fixed $\ma U$, 
$\ve c$ is easily found by solving a least squares problem.
This structure allows us to use \emph{variable projection}~\cite{GP73}
to define an equivalent optimization problem over $\ma U$ alone 
and construct its Jacobian as described in \cref{sec:reformulate:varpro}.
Then in \cref{sec:reformulate:orthogonality} we prove 
that slices of the Jacobian are orthogonal to the range of $\ma U$,
allowing us to reduce the cost of computing the Grassmann-Gauss-Newton step given in \cref{sec:algorithm}.
However, first we address the choice of polynomial basis $\lbrace \psi_j \rbrace_{j=1}^N$
as this choice has a profound influence on the conditioning of the approximation problem.

\subsection{Choice of basis}
Polynomial approximation has a well deserved reputation for being ill-conditioned~\cite[Ch.~22]{Hig02}.
For example, for a one-dimensional ridge function constructed in the monomial basis 
the matrix $\ma V(\ma U)$ is a Vandermonde matrix,
\begin{equation}
	\ma V(\ma U) =
	\begin{bmatrix}
		1 & (\ma U^\trans \ve x_1) & \cdots & (\ma U^\trans \ve x_1)^p \\
		1 & (\ma U^\trans \ve x_2) & \cdots & (\ma U^\trans \ve x_2)^p \\
		\vdots & \vdots & & \vdots \\
		1 & (\ma U^\trans \ve x_M) & \cdots & (\ma U^\trans \ve x_M)^p 
	\end{bmatrix}
	\!\! \in \R^{M \times (p+1)},
	\ \ n = 1,
	\ \ \psi_j(y) = y^{j-1}.
\end{equation}
Unless the sample points $\ve y_i = \ma U^\trans \ve x_i \in \R$ are
uniformly distributed on the complex unit circle, 
the condition number of this matrix grows exponentially in polynomial degree $p$~\cite{Pan16}.
As we assume we are given the sample points,
our only hope for controlling the condition number comes from our choice of basis $\lbrace \psi_j \rbrace_{j=1}^N$.
Here we invoke two strategies to control this condition number:
shifting and scaling the projected points $\ve y_i$
and choosing an appropriate orthogonal basis $\lbrace \psi_j \rbrace_{j=1}^N$.

We are free to shift and scale the projected points $\ve y_i$ as
any polynomial basis of total degree $p$ is still a basis for this space
when composed with an affine transformation.
Namely, if $\ve \eta: \R^n \to\R^n$ is the affine transformation 
\begin{equation}
	\ve \eta(\ve y) := \ve a + \ma D \ve y
\end{equation} 
and $\lbrace \psi_j \rbrace_{j=1}^N$ is a basis for $\mathbb{P}^p(\R^n)$, 
then $\lbrace \psi_j \circ \ve \eta \rbrace_{j=1}^N$ is also a basis for $\mathbb{P}^p(\R^n)$.
By careful choice of $\ve\eta$ we can significantly decrease the condition number.
For example as shown on the left of \cref{fig:conditioning},
shifting and scaling the projected points to the interval $[-1,1]$
drastically decreases the condition number in the monomial basis.
Similar results are seen for the other bases.
 
\begin{figure}
\centering
\noindent
% Following centered legend example from  https://tex.stackexchange.com/questions/315224/center-legend-above-or-below-a-groupplot-without-references
\begin{tikzpicture}
	\begin{groupplot}[
		group style = {group size = 2 by 1, horizontal sep = 4.5em},
		width = 6.5cm,
		height = 5cm,
		ymode = log,
		ymax = 1e20,
		ymin = 1,
		xmax = 30.5,
		xtick = {1,5,10,15,20,25,30},
		ytick = {1e0,1e5,1e10,1e15,1e20,1e25,1e30},
		xlabel = polynomial degree,
		ylabel = {condition number of $\ma V(\ma U)$},
		legend style = {/tikz/every even column/.append style={column sep=0.5cm}}, 
		title style = {yshift = -1ex},
		]
		\nextgroupplot[title = { $n=1$, $\ma U = \ve 1/10$}]
		\coordinate (c1) at (rel axis cs:0,1);
		\addplot[colorbrewerA5, only marks, mark = Mercedes star, semithick] table [x=degree, y=monomial_False] {fig_conditioning_1.dat};
		\addplot[colorbrewerA3, only marks, mark = Mercedes star flipped, semithick] table [x=degree, y=monomial_True] {fig_conditioning_1.dat};
		\addplot[colorbrewerA1, only marks, mark = +, semithick] table [x=degree, y=legendre_True] {fig_conditioning_1.dat};
		%\addplot[colorbrewerA2, only marks, mark = x] table [x=degree, y=chebyshev_True] {fig_conditioning_1.dat};
		\addplot[colorbrewerA2, only marks, mark = x, semithick] table [x=degree, y=hermite_True] {fig_conditioning_1.dat};

		\nextgroupplot[title = { $n=2$, $\ma U$ random},
			legend style={at={($(0,0)+(1cm,1cm)$)},legend columns=5,fill=none,draw=black,anchor=center,align=center},
			legend to name=fred
		]
		\coordinate (c2) at (rel axis cs:1,1);
		\addplot[colorbrewerA5, only marks, mark = Mercedes star, semithick] table [x=degree, y=monomial_False] {fig_conditioning_2.dat};
		\addplot[colorbrewerA3, only marks, mark = Mercedes star flipped, semithick] table [x=degree, y=monomial_True] {fig_conditioning_2.dat};
		\addplot[colorbrewerA1, only marks, mark = +, semithick] table [x=degree, y=legendre_True] {fig_conditioning_2.dat};
		%\addplot[colorbrewerA2, only marks, mark = x] table [x=degree, y=chebyshev_True] {fig_conditioning_2.dat};
		\addplot[colorbrewerA2, only marks, mark = x, semithick] table [x=degree, y=hermite_True] {fig_conditioning_2.dat};
		%\addplot[colorbrewerA4, only marks, mark = o, semithick, mark size = 1.5pt] table [x=degree, y=orthogonal_True] {fig_conditioning_2.dat};
		 
		\addlegendentry{monomial, unscaled};
		\addlegendentry{monomial, scaled to $[-1,1]^n$}
		\addlegendentry{Legendre}
		%\addlegendentry{Chebyshev}
		\addlegendentry{Hermite}    

	\end{groupplot}	
	\coordinate (c3) at ($(c1)!.5!(c2)$);
	\node[below] at (c3 |- current bounding box.south) {\pgfplotslegendfromname{fred}};
\end{tikzpicture}
\caption{The condition number of the matrix $\ma V(\ma U) \in \R^{M\times N}$
	based on different polynomial bases generated from $M = 1000$ random samples drawn with uniform probability from
	$\set D = [0,1]^{100}$.
	On the left, with the subspace chosen to equally weight each component, 
	we might expect the Hermite basis to be well conditioned 
	as low-dimensional projections of high-dimensional data are often Gaussian~\cite{Diaconis1984}.
	On the right we show an example with a two-dimensional subspace drawn randomly with uniform probability on the sphere.
}
\label{fig:conditioning}
\end{figure}
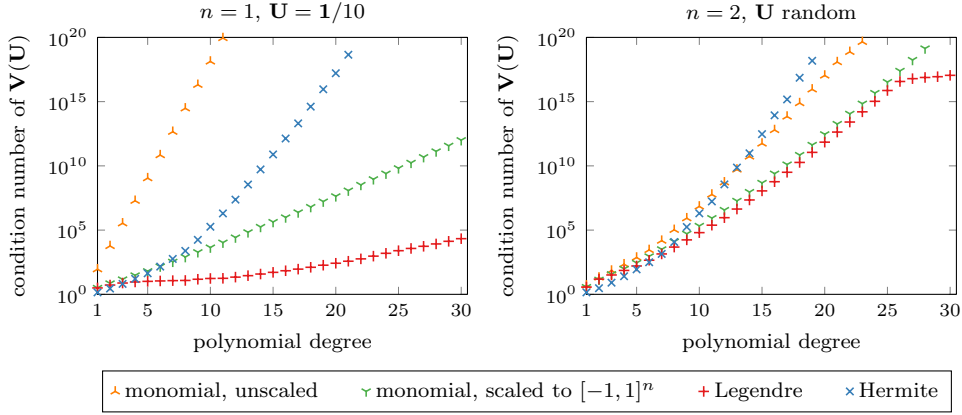 

The second strategy for controlling the condition number is to change the polynomial basis.
Ideally, we would choose a basis orthonormal under the weighted inner-product
induced by the projected points,
\begin{equation}\label{eq:basis_orth}
	\int \psi_j(\ve y) \psi_k(\ve y) \D \mu(\ve y) = \! \sum_{i=1}^M \psi_j(\ve y_i) \psi_k(\ve y_i)
		= \begin{cases} 
			1, & j = k; \\
			0, & j \ne k;
		\end{cases}
	\quad \mu(\ve y) = \sum_{i=1}^M \delta(\ve y - \ve y_i).
\end{equation}
With this choice, $\ma V(\ma U)$ would have a condition number of one.
However, constructing this basis for an arbitrary set of points is fraught
with the same ill-conditioning issues as $\ma V(\ma U)$ itself.
Instead, a reasonable heuristic is to choose a basis $\lbrace \psi_j \rbrace_{j=1}^M$
which is orthogonal under a weight $\mu$ that approximates the distribution of $\lbrace \ve y_i \rbrace_{i=1}^M$.
For example in one dimension, the Legendre polynomials are orthogonal with respect to the uniform measure on $[-1,1]$
and the Hermite polynomials are orthogonal with respect to $\mu(y) = e^{-y^2/2}$.
In \cref{fig:conditioning},
we compare the conditioning of three different bases applied to
one thousand points $\ve x_i$ uniformly randomly chosen from the cube $[0,1]^{100}$
and projected onto a one-dimensional subspace spanned by the ones vector.
By the law of large numbers, the projected points $\lbrace \ma U^\trans \ve x_i \rbrace_{i=1}^M$
are approximately normally distributed.
After shifting and scaling these points to have zero mean and unit standard deviation, 
$\lbrace \ve y_i = \ma U^\trans \ve x_i\rbrace_{i=1}^M$ will approximately sample the normal distribution
and hence we might expect the Hermite basis to be well-conditioned.
Although the Hermite basis is well conditioned for low degree polynomials,
the condition number rapidly grows for high degree polynomials
which is consistent with the observations by Hampton and Doostan~\cite{Hampton2015}.
In contrast, the Legendre polynomial basis when scaled and shifted to $[-1,1]$
provides a well conditioned basis even when the degree is high.

When we seek a ridge approximation with two or more dimensions, 
we can form the basis $\lbrace \psi_j \rbrace_{j=1}^M$ for total degree polynomials in $\R^n$
from a tensor product of one dimensional polynomials.
If this one dimensional basis $\lbrace \varphi_k \rbrace_{k=0}^p \subset \mathbb{P}^p(\R)$
has the property that $\varphi_k$ is degree $k$, then the basis $\lbrace \psi_j\rbrace_{j=1}^N$ has elements
\begin{equation}
	\psi_j(\ve y) = \prod_{k=1}^n \varphi_{[\ve \alpha_j]_k}([\ve y]_k),
	\qquad \ve \alpha_j \in \mathbb{N}^n, \quad |\ve \alpha_j| :=\sum_{k=1}^n [\ve \alpha_j]_k \le p
\end{equation} 
where $\lbrace\ve\alpha_j\rbrace_{j=1}^N$ is an enumeration of the multi-indices satisfying $|\ve \alpha_j| \le p$.
For the Legendre basis, this tensor product is still an orthogonal basis with respect to the uniform measure on $[-1,1]^n$.
However, the condition number of the matrix $\ma V(\ma U)$ built from the Legendre basis,
as well as the other bases, grows rapidly despite the scaling and shifting.
Since the tensor product Legendre basis is least ill-conditioned, 
we use this basis in the remainder of this paper.

\subsection{Variable projection\label{sec:reformulate:varpro}}
The key insight of variable projection is that if the nonlinear parameters $\ma U$
are fixed, the linear parameters $\ve c$ are easily recovered using the Moore-Penrose pseudoinverse, denoted by $^+$:
\begin{equation}
	\underset{\ve c}{\operatorname{minimize}} \ \| \ve f - \ma V(\ma U)\ve c\|_2 
		\quad \Rightarrow  \quad
		\ve c = \ma V(\ma U)^+\ve f.
\end{equation}
Then, replacing $\ve c$ with $\ma V(\ma U)^+\ve f$ in \cref{eq:min_equiv}
reveals an optimization problem over $\ma U$ alone:
\begin{equation}
	\underset{\Range \ma U \in \mathbb{G}(n, \R^m)}{\operatorname{minimize}}\  \| \ve f - \ma V(\ma U) \ma V(\ma U)^+\ve f\|_2^2.
\end{equation}
Recognizing $\ma V(\ma U)\ma V(\ma U)^+$ as an orthogonal projector onto the range of $\ma V(\ma U)$, 
we can rewrite the minimization problem as 
\begin{equation}
	\underset{\Range \ma U \in \mathbb{G}(n, \R^m)}{\operatorname{minimize}}\, \| \ma P_{\ma V(\ma U)}^\perp \ve f\|_2^2
\end{equation}
where $\ma P_{\ma V(\ma U)}^\perp$ is the orthogonal projector onto the complement of the range of $\ma V(\ma U)$.

Golub and Pereyra provide a formula for the derivative of the residual $\ve r(\ma U) := \ma P_{\ma V(\ma U)}^\perp \ve f$~\cite[eq.~(5.4)]{GP73}
with respect to $\ma U$.
Denoting the derivative of the residual as a tensor 
$\ten J(\ma U)\in \R^{M \times m\times n}$
where $[\ten J(\ma U)]_{i,j,k} = \partial [\ve r(\ma U)]_{i}/\partial [\ma U]_{j,k}$,
then
\begin{equation}\label{eq:ten_J}
	[\ten J(\ma U)]_{\cdot, j,k} = -\left[
			\left(\ma P_{\ma V(\ma U)}^\perp \frac{\partial \ma V}{\partial [\ma U]_{j,k}} (\ma U) \  \ma V(\ma U)^-\!\right)
			+ 
			\left(\ma P_{\ma V(\ma U)}^\perp \frac{\partial \ma V}{\partial [\ma U]_{j,k}} (\ma U) \ \ma V(\ma U)^-\!\right)^{\!\!\!\trans\ }
		\right]\ve f,
\end{equation}
where $^-$ denotes a \emph{least squares pseudoinverse} such that 
$\ma V\ma V^-\ma V = \ma V$ and $\ma V\ma V^- = (\ma V\ma V^-)^\trans$,
a weaker pseudo-inverse than the Moore-Penrose pseudo-inverse~\cite[Ch.~6]{CM79}.
Then, from the definition of $\ma V(\ma U)$ in~\cref{eq:defV}
its derivative is:
\begin{align}
	\left[\frac{\partial \ma V}{\partial [\ma U]_{k,\ell}}(\ma U)\right]_{i,j} &= 
		[\ve x_i]_k \left. \frac{\partial \psi_j(\ve y)}{\partial [\ve y]_\ell} \right|_{\ve y = \ma U^\trans \ve x_i}.
\end{align}
In particular, for a tensor product basis
$\lbrace \psi_j(\ve y) = \prod_{k=1}^n \varphi_{[\ve \alpha_j]_k}( \ve \eta([\ve y]_k)) \rbrace_{j=1}^{N}$
composed with an affine transformation
$\ve \eta(\ve y) = \ve a + \diag(\ve d) \ve y$,
we have
\begin{align}
	\left[\frac{\partial \ma V}{\partial [\ma U]_{k,\ell}}(\ma U)\right]_{i,j} &= 
		 [\ve d]_k \; [\ve x_i]_k \; \varphi_{[\ve \alpha_j]_\ell}'([\ve \eta(\ma U^\trans \ve x_i)]_\ell)\;
			 \prod_{\substack{q=1\\ q\ne \ell}}^n \; \varphi_{[\ve \alpha_j]_q}([\ve \eta(\ma U^\trans \ve x_i)]_q)\ .
\end{align}

\subsection{Orthogonality\label{sec:reformulate:orthogonality}}
In addition to constructing an explicit formula for the Jacobian of $\ve r(\ma U)$, 
we also show that slices of this Jacobian $\ten J(\ma U)$ are orthogonal to $\ma U$;
that is $\ma U^\trans [\ten J(\ma U)]_{i,\cdot,\cdot} = \ma 0$.
This emerges due to the use of variable projection and
the use of a polynomial basis for the columns of $\ma V(\ma U)$
so that when $\ma U^\trans$ is multiplied by slices of the Jacobian, 
this derivative is mapped back into the range of the polynomial basis for $\ma V(\ma U)$.
Later, in \cref{sec:algorithm:gn},
we exploit this structure to reduce the computational burden during optimization. 

\begin{theorem}\label{thm:orth}
	If $\ten J(\ma U)$ is defined as in~\cref{eq:ten_J}, 
	then $\ma U^\trans [\ten J(\ma U)]_{i,\cdot,\cdot} = \ma 0$ $\forall i$.
\end{theorem}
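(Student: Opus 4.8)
The plan is to cash in the observation flagged just before the statement: contracting $\ma U^\trans$ against the middle (ambient-dimension) slot of $\ten J(\ma U)$ reassembles a derivative of $\ma V$ that, by a degree-counting argument, lands back in the column space of $\ma V(\ma U)$ and is therefore annihilated by $\ma P_{\ma V(\ma U)}^\perp$. Writing $\ma V'_{k,\ell} := \partial \ma V/\partial[\ma U]_{k,\ell}$ and abbreviating $\ma P^\perp := \ma P_{\ma V(\ma U)}^\perp$, the definition \cref{eq:ten_J} reads $[\ten J(\ma U)]_{\cdot,k,\ell} = -[\ma P^\perp \ma V'_{k,\ell}\ma V(\ma U)^- + (\ma P^\perp \ma V'_{k,\ell}\ma V(\ma U)^-)^\trans]\ve f$, where the first (ambient) index $k$ runs over $1,\dots,m$ and the second index $\ell$ over $1,\dots,n$.

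First I would reduce the claim to a single projector identity. The $(\ell',\ell)$ entry of $\ma U^\trans[\ten J(\ma U)]_{i,\cdot,\cdot}$ is the $i$th component of the vector $\sum_{k=1}^m[\ma U]_{k,\ell'}[\ten J(\ma U)]_{\cdot,k,\ell}$. Since $\ma P^\perp$ and $\ma V(\ma U)^-$ are independent of the contraction index $k$, I can pull the sum inside, and the vector becomes $-[\ma P^\perp \ma W\ma V(\ma U)^- + (\ma P^\perp \ma W\ma V(\ma U)^-)^\trans]\ve f$ with $\ma W := \sum_{k=1}^m[\ma U]_{k,\ell'}\ma V'_{k,\ell}$. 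Both terms vanish as soon as $\ma P^\perp\ma W = \ma 0$, so the whole theorem collapses to showing that every column of $\ma W$ lies in $\Range\ma V(\ma U)$.

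The heart of the argument is computing $\ma W$ and recognizing it as Vandermonde-like. Using the chain-rule formula for $\ma V'_{k,\ell}$, the $(i,j)$ entry of $\ma W$ is $\sum_{k=1}^m[\ma U]_{k,\ell'}[\ve x_i]_k\,\partial_{[\ve y]_\ell}\psi_j|_{\ve y=\ma U^\trans\ve x_i}$, and the linear factor $\sum_k[\ma U]_{k,\ell'}[\ve x_i]_k = [\ma U^\trans\ve x_i]_{\ell'} = [\ve y_i]_{\ell'}$ pulls straight out of the sum. Hence column $j$ of $\ma W$ is the evaluation at the projected points $\{\ve y_i\}$ of the single function $h_j(\ve y) := [\ve y]_{\ell'}\,\partial_{[\ve y]_\ell}\psi_j(\ve y)$.

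The key step is then the degree count: $\psi_j$ has total degree at most $p$, so $\partial_{[\ve y]_\ell}\psi_j$ has total degree at most $p-1$, and multiplying by the linear factor $[\ve y]_{\ell'}$ returns total degree at most $p$, whence $h_j\in\mathbb{P}^p(\R^n)$. Since $\{\psi_{j'}\}_{j'=1}^N$ is a basis of that space, $h_j$ is a linear combination of the $\psi_{j'}$, so evaluating at the $\ve y_i$ expresses column $j$ of $\ma W$ as a linear combination of the columns of $\ma V(\ma U)$; thus $\ma P^\perp\ma W = \ma 0$ and the theorem follows. I expect this degree-preservation observation — that a linear factor times the gradient of a degree-$p$ polynomial stays inside $\mathbb{P}^p(\R^n)$, exactly the space spanned by the columns of $\ma V(\ma U)$ — to be the only non-mechanical step. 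The affine shift-and-scale composed into each $\psi_j$ is harmless, as affine maps preserve total degree; the one point to watch in the bookkeeping is that the contraction index $\ell'$ of $\ma U^\trans$ and the differentiation index $\ell$ are distinct, so $h_j$ genuinely carries both, yet this leaves the degree bound intact.
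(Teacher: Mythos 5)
Your proposal is correct and follows essentially the same route as the paper's proof: pull the contraction with $\ma U^\trans$ inside both terms of \cref{eq:ten_J}, identify the resulting matrix $\ma W$ as evaluations of $[\ve y]_{\ell'}\,\partial_{[\ve y]_\ell}\psi_j$ at the projected points, and conclude by the degree count that $\ma W\in\Range\ma V(\ma U)$, so $\ma P_{\ma V(\ma U)}^\perp\ma W=\ma 0$ kills both terms (the transposed one via symmetry of the projector). The only cosmetic difference is that you argue basis-free directly from the chain rule, whereas the paper reduces without loss of generality to the tensor-product monomial basis and tracks multi-indices explicitly.
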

\begin{proof}
First note,
\begin{align*}
	\left[\ma U^\trans [\ten J(\ma U)]_{i,\cdot,\cdot}\right]_{j,k}\!\!
		&= \sum_{\ell=1}^m [\ma U]_{\ell,j} [\ten J(\ma U)]_{i,\ell,k} \\
		&= \!-\!\sum_{\ell=1}^m [\ma U]_{\ell,j} \! \!\left[  
			\ma P_{\ma V(\ma U)}^\perp \frac{\partial \ma V(\ma U)}{\partial [\ma U]_{\ell,k}}
				\ma V(\ma U)^{-} \ve f
			\!+\! \ma V(\ma U)^{-\!\trans} \frac{\partial \ma V(\ma U) }{\partial [\ma U]_{\ell,k}}^\trans \! \ma P_{\ma V(\ma U)}^\perp \ve f
		\right]_i\!\!.
\end{align*}
By linearity, we take the sum inside each term, leaving
\begin{equation}\label{eq:product}
		= -\!\left[  
			\ma P_{\ma V(\ma U)}^\perp \! \left[ \sum_{\ell=1}^m [\ma U]_{\ell, j} \frac{\partial \ma V(\ma U)}{\partial [\ma U]_{\ell,k}} \right]
				\! \ma V(\ma U)^{-} \ve f
			+ \ma V(\ma U)^{-\!\trans} \!
				\left[ \sum_{\ell=1}^m [\ma U]_{\ell, j} \frac{\partial \ma V(\ma U) }{\partial [\ma U]_{\ell,k}}^\trans\right]
		 \!\ma P_{\ma V(\ma U)}^\perp \ve f
		\right]_i\!.
\end{equation}
Defining this interior sum as the matrix $\ma W^{(j,k)}$,
\begin{equation}
	\ma W^{(j,k)} := \sum_{\ell=1}^m [\ma U]_{\ell,j} \frac{\partial \ma V(\ma U)}{\partial [\ma U]_{\ell,k}}
\end{equation}
we can show $\ma W^{(j,k)}$ is in the range of $\ma V(\ma U)$.
Without loss of generality, we work in the
monomial tensor product basis where $\psi_j(\ve y) = \prod_{k=1}^n [\ve y]_k^{[\ve\alpha_j]_k}$
where
\begin{align}
	[\ma V(\ma U)]_{i,j} &= \prod_{k=1}^n [\ma U^\trans \ve x_i]_k^{[\ve \alpha_j]_k}, \\
	\left[ 
		\frac{\partial \ma V}{\partial [\ma U]_{k,\ell}}(\ma U)
	\right]_{i,j}
	&= [\ve\alpha_j]_\ell \ [\ve x_i]_k \  [\ma U^\trans \ve x_i]^{[\ve \alpha_j]_{\ell} - 1}_\ell 
		\prod_{\substack{q = 1\\ q\ne \ell}}^n [\ma U^\trans \ve x_i]_q^{[\ve \alpha_j]_q}.
\end{align}
Then the interior sum encodes the inner product $[\ma U^\trans \ve x_r]_j$,
\begin{align*}
	[\ma W^{(j,k)}]_{r,q} = \left[\sum_\ell [\ma U]_{\ell,j} \frac{\partial \ma V(\ma U)}{\partial [\ma U]_{\ell,k}}\right]_{r,q}
		\!\!\!\!
		&= \sum_\ell [\ma U]_{\ell,j} [\ve \alpha_q]_k [\ma U^\trans\ve x_r]_k^{[\ve \alpha_q]_k -1} [\ve x_r]_\ell
			\prod_{\substack{s=1\\ s\ne k}}^n [\ma U^\trans \ve x_r]^{[\ve \alpha_q]_s}_s \\
		&= [\ve \alpha_q]_k [\ma U^\trans \ve x_r]^{[\ve \alpha_q]_k - 1}_k [\ma U^\trans \ve x_r]_j
			\prod_{\substack{s=1\\ s\ne k}}^n [\ma U^\trans \ve x_r]^{[\ve \alpha_q]_s}_s. 
\end{align*}
This term on right is a polynomial of total degree at most $p$ in $\ma U^\trans \ve x_r$,
since although the power on the $k$th term of $\ma U^\trans \ve x_r$ has decreased by one,
the $j$th term has increased by one, leaving the total degree of this term the same,
namely less than or equal to $p$.
Thus, as $\ma W^{(j,k)}\in \Range(\ma V(\ma U))$ then $\ma P_{\ma V(\ma U)}^\perp \ma W^{(j,k)} = \ma 0$.
As this product appears in both terms of \cref{eq:product}, we conclude
$\ma U^\trans [\ten J]_{i,\cdot,\cdot} = \ma 0$.
\end{proof}

\section{Optimization on the Grassmann manifold\label{sec:algorithm}}
Having implicitly found the polynomial $g$ using variable projection
in the previous section, 
we now develop an algorithm for solving the ridge approximation problem posed over $\ma U$ alone:
\begin{equation}
	\underset{\Range \ma U \in \mathbb{G}(n,\R^m)}{\operatorname{minimize}} \  \| \ma P_{\ma V(\ma U)}^\perp \ve f\|_2^2.
\end{equation}
This optimization problem over the Grassmann manifold of all $n$-dimensional subspaces of $\R^m$
is more complicated than optimization on Euclidean space.
Here we follow the approach of Edelman, Arias, and Smith~\cite{EAS98}
where the subspace is parameterized by a matrix $\ma U\in \R^{m\times n}$
with orthonormal columns, i.e., $\ma U$ satisfies the constraint $\ma U^\trans \ma U = \ma I$.
We first review Newton's method on the Grassmann manifold following their construction
before modifying their approach to construct a Gauss-Newton method on the Grassmann manifold
for the data-driven polynomial ridge approximation problem~\cref{eq:pra_opt}.
In the process we note that the orthogonality of $\ma U$ to slices of the Jacobian
as proved in \cref{thm:orth} allows many terms to drop, simplifying the optimization problem.
An alternative approach would be to follow the Gauss-Newton approach of Absil, Mahony, and Sepulchre~\cite[\S 8.4.1]{AMS08}
which removes the orthogonality constraint.
However, by working in the framework of Edelman, Arias, and Smith
we are able to show the additional terms in the Hessian drop due to the orthogonality 
result from \cref{thm:orth}.

\subsection{Newton's method on the Grassmann manifold}
To begin, we first review Newton's method on the Grassmann manifold,
following Edelman, Arias, and Smith~\cite{EAS98}.
There are two key properties we consider:
how to update $\ma U$ given a search direction $\ma \Delta$
and how to choose the search direction $\ma \Delta$ using Newton's method.

When optimizing in a Euclidean space, given a search direction $\ve d$ and an initial point $\ve x_0$,
the next iterate is chosen along the trajectory $\ve x(t) = \ve x_0 + t \ve d$ for $t\in(0,\infty)$
where $t$ is selected to ensure convergence.
However, if we were to apply the same search strategy to our parameterization of the Grassmann manifold
this would result in a point that does not obey the orthogonality constraint: $\ma U^\trans \ma U = \ma I$.
Instead, we replace the linear trajectory with a geodesic (a contour with constant derivative).
Following~\cite[Thm.~2.3]{EAS98}, if $\ma \Delta\in \R^{m\times n}$ 
is the search direction that is tangent to the current estimate $\ma U_0$, $\ma U_0^\trans \ma \Delta = \ma 0$,
then we choose $\ma U(t)$ on the geodesic
\begin{equation}\label{eq:geodesic}
	\ma U(t) = \ma U_0 \ma Z \cos (\ma \Sigma t) \ma Z^\trans + \ma Y \sin(\ma \Sigma t) \ma Z^\trans 
\end{equation}
where $\ma \Delta = \ma Y \ma \Sigma \ma Z^\trans $ is the short form singular value decomposition.

In addition to changing the search trajectories, 
optimization on the Grassmann manifold changes the gradient and Hessian.
Here we consider an arbitrary function $\phi$ of a matrix $\ma U\in \R^{m\times n}$ with orthonormal columns.
Following~\cite[(2.52)\&(2.56)]{EAS98},
the first and second derivatives of $\phi$ with respect to the entries of $\ma U$ are
\begin{align}
	[\phi_{\ma U}]_{i,j} &:= \frac{\partial \phi}{\partial [\ma U]_{i,j}}, 
	& \phi_\ma U&\in \R^{m\times n}; %\label{eq:phi1}
	\\
	[\phi_{\ma U\ma U}]_{i,j,k,\ell} &:= \frac{\partial^2 \phi}{\partial [\ma U]_{i,j} \partial [\ma U]_{k,\ell}}, 
	& \phi_{\ma U \ma U}&\in \R^{m \times n \times m \times n}. 
	%\label{eq:phi2}
\end{align} 
With these definitions, the gradient of $\phi$ on the Grassmann manifold that is tangent to $\ma U$ is~\cite[(2.70)]{EAS98}:
\begin{align}%\label{eq:grad}
	\grad \phi = \phi_\ma U - \ma U \ma U^\trans \phi_\ma U = \ma P_{\ma U}^\perp \phi_\ma U \in \R^{m\times n}.
\end{align}
The Hessian of $\phi$ on the Grassmann manifold, $\Hess \phi \in \R^{m\times n\times m \times n}$,
is defined by its action on two test matrices $\ma \Delta, \ma X\in \R^{m\times n}$~\cite[(2.71)]{EAS98}
\begin{equation}\label{eq:Hessian}
	\Hess \phi (\ma \Delta, \ma X) = 
		\sum_{i,j,k,\ell} [\phi_{\ma U\ma U}]_{i,j,k,\ell} [\ma \Delta]_{i,j} [\ma X]_{k,\ell}
			- \trace (\ma \Delta^\trans \ma X \ma U^\trans \phi_\ma U).
\end{equation}
Using these definitions, 
Newton's method on the Grassmann manifold at $\ma U$ chooses 
the tangent search direction $\ma \Delta \in \R^{m\times n}$ satisfying~\cite[(2.58)]{EAS98} 
\begin{equation}\label{eq:newton_step}
		\ma U^\trans \ma \Delta = \ma 0
		\quad 
		\text{and}
		\quad 
		\Hess \phi(\ma \Delta, \ma X) = - \, \langle \grad \phi,\ma X\rangle 
		\quad  \forall \ \ma X \text{ such that } \ma U^\trans\ma X = 0 
\end{equation} 
where the inner product $\langle \cdot, \cdot\rangle$ on this space is~\cite[(2.69)]{EAS98}
\begin{equation}\label{eq:inner}
	\langle \ma X, \ma Y\rangle = \trace \ma X^\trans \ma Y = (\vectorize \ma X)^\trans (\vectorize \ma Y)
\end{equation}
and $\vectorize$ maps the matrix $\R^{m\times n}$ into the vector $\R^{mn}$.

\subsection{Grassmann Gauss-Newton\label{sec:algorithm:gn}}
The challenge with Newton's method is it requires second derivative information
which is difficult to obtain for our problem.
Here we replace the true Hessian with the Gauss-Newton approximation, 
yielding a Grassmann Gauss-Newton method.
To summarize the key points of the following argument, 
the orthogonality of slices of the Jacobian to $\ma U$
established in \cref{thm:orth} allows us to drop the second term in the Hessian~\cref{eq:Hessian}
and replace the normal equations~\cref{eq:newton_step}
with a better conditioned least squares problem~\cref{eq:gn_step}
analogous to the Gauss-Newton method in Euclidean space~\cite[(10.26)]{NW06}.
A similar Gauss-Newton method is given in~\cite[\S8.4]{AMS08}
where the subspace is parameterized by a matrix that is not necessarily orthogonal
and uses a different geodesic step.

The objective function for data-driven polynomial ridge approximation is:
\begin{equation}
	\phi(\ma U) = \frac12 \| \ma P_{\ma V(\ma U)}^\perp \ve f\|_2^2 = \frac12 \|\ve r(\ma U)\|_2^2
\end{equation}
where first and second derivatives of $\phi$ are  
\begin{align}
	\phi_\ma U &= \sum_{i=1}^M [\ten J(\ma U)]_{i,\cdot,\cdot} [\ve r(\ma U)]_i; \\
	[\phi_{\ma U\ma U}]_{i,j,k,\ell} &= \sum_{q} [\ten J(\ma U)]_{q,i,j} [\ten J(\ma U)]_{q,k,\ell}
		+ \sum_q [\ve r(\ma U)]_q \frac{[\ve r(\ma U)]_q}{\partial [\ma U]_{i,j} \partial [\ma U]_{k,\ell}} .
\end{align}
Invoking the Gauss-Newton approximation, we drop the second term above %of \cref{eq:pra_Hess}
\begin{equation}
	[\phi_{\ma U\ma U}]_{i,j,k,\ell} \approx [\widetilde{\phi}_{\ma U\ma U}]_{i,j,k,\ell}
		:= \sum_{q} [\ten J(\ma U)]_{q,i,j} [\ten J(\ma U)]_{q,k,\ell}.
\end{equation}
Then replacing $\phi_{\ma U\ma U}$ with $\widetilde{\phi}_{\ma U\ma U}$ in the Hessian \cref{eq:Hessian} yields
the approximate Hessian
\begin{equation}\label{eq:hess_approx}
	\widetilde{\Hess}\ \phi (\ma \Delta, \ma X) := 
		\sum_{i,j,k,\ell, q} [\ten J(\ma U)]_{q,i,j} [\ma \Delta]_{i,j} [\ten J(\ma U)]_{q,k,\ell} [\ma X]_{k,\ell}
			- \trace (\ma \Delta^\trans \ma X \ma U^\trans \phi_\ma U).
\end{equation}
Immediately, we note that by \cref{thm:orth} $\ma U^\trans [\ten J(\ma U)]_{i,\cdot,\cdot} = \ma 0$ and hence,
\begin{equation}\label{eq:phi_orth}
	\ma U^\trans \phi_\ma U = \sum_i \ma U^\trans [\ten J(\ma U)]_{i,\cdot,\cdot} [\ve r(\ma U)]_i
		= \ma 0.
\end{equation}
Thus the second term drops out of the approximate Hessian~\cref{eq:hess_approx}, leaving
\begin{equation}
	\widetilde{\Hess}\  \phi (\ma \Delta, \ma X) = 
		\sum_{i,j,k,\ell, q} [\ten J(\ma U)]_{q,i,j} [\ma \Delta]_{i,j} [\ten J(\ma U)]_{q,k,\ell} [\ma X]_{k,\ell}.
\end{equation}
This summation can be rearranged to look like the more familiar Hessian approximation $\ma J^\trans \ma J$
when the Jacobian $\ma J$ is a matrix.
Defining the vectorization operator for tensors that maps $\ten J(\ma U)\in \R^{M\times m\times n}$
to a matrix in $\R^{M\times mn}$,
this product above is
\begin{equation}
	\widetilde{\Hess}\ \phi(\ma \Delta, \ma X) = 
		(\vectorize \ma X)^\trans  (\vectorize \ten J(\ma U))^\trans
		(\vectorize \ten J(\ma U)) (\vectorize \ma \Delta).
\end{equation}

With this familiar expression for the approximate Hessian, 
we now seek to rework the Newton step  
from a square linear system into an overdetermined least squares problem.
First, invoking \cref{thm:orth} via \cref{eq:phi_orth}, we note that 
the gradient automatically satisfies the orthogonality constraint since $\ma U^\trans \phi_\ma U=\ma 0$:
\begin{equation}\label{eq:pra_grad}
	\grad \phi(\ma U) = \phi_\ma U - \ma U\ma U^\trans \phi_\ma U = \phi_\ma U 
	= (\vectorize \ten J(\ma U))^\trans \ve r(\ma U).
\end{equation}
Then, examining the right hand side of the Newton step~\cref{eq:newton_step},
\begin{align}
	\langle \grad \phi, \ma X\rangle% = \langle \phi_{\ma U}, \ma X \rangle  
	= (\vectorize \ma X)^\trans (\vectorize \ten J(\ma U))^\trans \ve r(\ma U).
	% \trace \phi_\ma U
	%&= \trace \phi_\ma U^\trans \ma X 
	%	= \trace \left[ \sum_i \ma X^\trans[\ten J(\ma U)]_{i,\cdot,\cdot} [\ve r(\ma U)]_i \right] \\
	%	&= \sum_i (\vectorize \ma X)^\trans [\vectorize \ten J(\ma U)]_{i,\cdot}^\trans
	%		[\ve r(\ma U)]_i  = (\vectorize \ma X)^\trans (\vectorize \ten J(\ma U))^\trans \ve r(\ma U)
\end{align}
Thus, the Gauss-Newton step $\ma \Delta \in \R^{m\times n}$ 
is the matrix satisfying $\ma U^\trans \ma \Delta = \ma 0$ 
and 
\begin{align}
	(\vectorize \ma X)^{\!\trans}\! (\vectorize \ten J(\ma U))^{\!\trans}\! (\vectorize \ten J(\ma U)) (\vectorize \ma \Delta)
	&= -(\vectorize \ma X)^{\!\trans} \!(\vectorize \ten J(\ma U))^{\!\trans} \ve r(\ma U), 
		%\forall \ma X \text{ s.t. }\!\! \ma U^{\!\trans} \!\ma X \!=\! \ma 0. 
\end{align}
for all test matrices $\ma X$ such that $\ma U^\trans \ma X = \ma 0$.
To convert this into a least squares problem, we first replace the constraint $\ma U^\trans \ma X = \ma 0$
by substituting $\ma X$ by $\ma P_{\ma U}^\perp\ma X$ where $\ma P_{\ma U}^\perp$ is the orthogonal projector
onto the complement of the range of $\ma U$, $\ma P_{\ma U}^\perp = \ma I - \ma U\ma U^\trans$.
This leaves a linear system of equations over all test matrices $\ma X\in \R^{m\times n}$:
\begin{align}
	(\vectorize \ma P_{\ma U}^\perp \ma X)^{\!\trans}\! (\vectorize \ten J(\ma U))^{\!\trans}\! (\vectorize \ten J(\ma U)) (\vectorize \ma \Delta)
	&\!=\! -(\vectorize \ma P_{\ma U}^\perp \ma X)^{\!\trans} \!(\vectorize \ten J(\ma U))^{\!\trans} \ve r(\ma U). 
\end{align}
On the left, the projector $\ma P_\ma U^\perp$ vanishes by \cref{thm:orth}
\begin{align}
	\left[ (\vectorize \ten J(\ma U))(\vectorize \ma P_\ma U^\perp \ma X)\right]_i
		&= \Tr \left[ (\ma P_{\ma U}^\perp \ma X)^\trans [\ten J(\ma U)]_{i,\cdot,\cdot} \right]
		= \Tr \left[ \ma X^\trans (\ma I - \ma U \ma U^\trans) [\ten J(\ma U)]_{i,\cdot,\cdot}\right] \nonumber \\
		&= \Tr \left[ \ma X^\trans [\ten J(\ma U)]_{i,\cdot,\cdot}\right]
		= (\vectorize \ten J(\ma U))(\vectorize \ma X). \label{eq:porth}
\end{align}
Then, using the coordinate matrices $\ve e_i \ve e_j^\trans$ as a basis for $\ma X \in \R^{m \times n}$,
we then recover the normal equations 
\begin{align}
	 (\vectorize \ten J(\ma U))^{\!\trans}\! (\vectorize \ten J(\ma U)) (\vectorize \ma \Delta)
	= (\vectorize \ten J(\ma U))^{\!\trans} \ve r(\ma U).
\end{align}
Hence as in a Euclidean space (c.f.~\cite[(10.26)]{NW06}),
the Gauss-Newton step $\ma \Delta$ is the solution to the linear least squares problem
\begin{equation}\label{eq:gn_step}
	\underset{\substack{\ma \Delta \in \R^{m\times n}\\ \ma U^\trans \ma \Delta = \ma 0}}{\operatorname{minimize}}
		\left\|
			\vectorize \ten J(\ma U) \vectorize \ma \Delta - \ve r(\ma U)\right\|_2^2.
\end{equation}

Finally, we note that $\vectorize \ten J(\ma U)$ has a nullspace such that
when~\cref{eq:gn_step} is solved using the pseudoinverse, 
the step $\ma \Delta$ will automatically satisfy the constraint $\ma U^\trans \ma \Delta = \ma 0$.
Using \cref{eq:porth} we can insert the projector $\ma P_\ma U^\perp$ into the Gauss-Newton step \cref{eq:gn_step}
\begin{equation}
	\vectorize \ten J(\ma U) \vectorize \ma \Delta
	= \vectorize \ten J(\ma U)\vectorize (\ma P_{\ma U}^\perp \ma \Delta)
	= \vectorize \ten J(\ma U) [ \ma I_n \otimes \ma P_{\ma U}^\perp ] \vectorize \ma \Delta
\end{equation}
where $\otimes$ is the Kronecker product.
Thus $\vectorize \ten J(\ma U)$ has a nullspace of dimension $n^2$
which contains $\ma I_n \otimes \ma U\ma U^\trans$
and hence if \cref{eq:gn_step} is solved via the pseudoinverse,
$\ma \Delta$ will  automatically obey the constraint $\ma U^\trans \ma \Delta = \ma 0$.
This reveals our Gauss-Newton step:
\begin{equation}\label{eq:delta}
	\vectorize \ma \Delta = -[\vectorize \ten J(\ma U)]^+ \ve r(\ma U)
		= -\begin{bmatrix} [\ten J(\ma U)]_{\cdot,\cdot,1} & \ldots & [\ten J(\ma U)]_{\cdot,\cdot, n}\end{bmatrix}^+
			\ve r(\ma U).
\end{equation}
This pseudoinverse solution has a similar asymptotic cost to the normal equations:
an $\order(M(mn)^2)$ operation SVD where $M> mn$ 
compared to an $\order((mn)^3)$ dense linear solve.
However the pseudoinverse solution is better conditioned, 
avoiding the squaring of the condition number in the normal equations~\cite[\S2.3.3]{Bjo96}. 

\subsection{Algorithm}
We now combine the Gauss-Newton step~\cref{eq:delta}
with backtracking along the geodesic~\cref{eq:geodesic}
to construct a convergent data-driven polynomial ridge approximation algorithm.
The complete algorithm is given in \cref{alg:main},
using a pseudo-inverse to construct the Gauss-Newton step as in \cref{eq:delta}.
We ensure convergence by inserting a check on line~\ref{alg:main:check}
to ensure $\ma \Delta$ is always a descent direction, and if not, replacing it with the negative gradient.
Then the sequence of $\ma \Delta$ is a \emph{gradient related sequence}~\cite[Def.~4.2.1]{AMS08}
and the iterates $\ma U$ converge to a stationary point where $\grad \phi(\ma U) = \ma 0$
by~\cite[Cor.~4.3.2]{AMS08} since the Grassmann manifold is compact~\cite[\S9]{Won67}.

\begin{algorithm}[t]
\begin{minipage}{\linewidth}
\begin{algorithm2e}[H]
	%\caption{Ridge Approximation using Variable Projection}
	%\label{alg:main}
	\Input{Sample points $\ma X \in \R^{M\times m}$; 
		function values $\ve f\in \R^M$, $[\ve f]_i = f([\ma X]_{i,\cdot})$;
		subspace dimension $n$; polynomial degree $p$ (if $p=1$, then $n=1$);\\
		step length reduction factor $\gamma \in (0,1)$;
		Armijo tolerance $\beta \in (0,1)$.
		}
	\Output{Active subspace $\ma U\in \R^{m\times n}$; 
		polynomial coefficients $\ve c\in \R^N$, $N = {{n + p \choose p}}$.}
	Sample entries of $\ma Z\in \R^{m\times n}$ from a normal distribution\;
	Compute short form QR $\ma U \ma R \leftarrow \ma Z$ \comment{\quad hence $\ma U$ uniformly samples $\mathbb{G}(n, \R^m)$}
	\Repeat{$\ma U$ converges}{
		Compute $\lbrace \ve y_i = \ma U^\trans\ve x_i\rbrace_{i=1}^M$ \;
		Construct affine transformation $\ve \eta$ of $\lbrace \ve y_i \rbrace_{i=1}^M$ to $[-1,1]^n$\;
		Build $\ma V(\ma U)$ using tensor product Legendre basis composed with $\ve \eta$ via \cref{eq:defV}: $\ma V \leftarrow \ma V(\ma U)$\;
		Compute polynomial coefficients $\ve c \leftarrow \ma V^+ \ve f$\;
		Compute the residual: $\ve r \leftarrow \ve r(\ma U) = \ve f - \ma V \ve c$\;
		Build the Jacobian~\cref{eq:ten_J}: $\ten J \leftarrow \ten J(\ma U) \in \R^{M\times m \times n}$\;
		Build the gradient~\cref{eq:pra_grad}: $\ma G \leftarrow \ma G(\ma U) = \sum_{i=1}^M [\ten J]_{i,\cdot,\cdot} [\ve r]_i$\;
		Compute the short form SVD: $\ma Y \ma \Sigma \ma Z^\trans \leftarrow \vectorize \ten J$\;
		Compute the Gauss-Newton step~\cref{eq:delta}:
			$\!\!\vectorize\! \ma \Delta \!\!\leftarrow\! -\!\! \sum_{i=1}^{mn\!-n^{\!2}} \![\ma \Sigma]_{i,i}^{-1}\! 
				[\ma Z]_{\cdot, i}[\ma Y^{\!\trans}\! \ve r]_{i}$\;
		Compute slope along Gauss-Newton step: $\alpha \leftarrow \trace \ma G^\trans \ma \Delta = (\vectorize \ma G)^\trans(\vectorize \ma \Delta)$\;
		\If(\CommentSty{\quad $\ma \Delta$ is not a descent direction}){$\alpha \ge 0$}{\label{alg:main:check}
			$\ma \Delta \leftarrow -\ma G$\;
			$\alpha \leftarrow \trace \ma G^\trans \ma \Delta$\;
		}
		Compute the short form SVD: $\ma Y \ma \Sigma \ma Z^\trans \leftarrow \ma \Delta$\;
		\For(\CommentSty{\quad backtracking line search}){$t=\gamma^0, \gamma^1, \gamma^2, \ldots$}{
			Compute new step~\cref{eq:geodesic}: 
				$\ma U_+ \leftarrow \ma U \ma Z \cos (\ma \Sigma t) \ma Z^\trans + \ma Y \sin(\ma \Sigma t) \ma Z^\trans$\;
			Compute new residual: $\ve r_+ \leftarrow \ve f -  \ma V(\ma U_+)\ma V(\ma U_+)^+\ve f$\;
			\If(\CommentSty{\quad Armijo condition satisfied}){$\| \ve r_+\|_2 \le \| \ve r\|_2 + \alpha \beta t $}{
				{\bf break}\;
			}
		}	
		Update the estimate $\ma U \leftarrow \ma U_+$\;
	}
\end{algorithm2e}
\vspace{-12pt}
\end{minipage}
\caption{Variable projection polynomial ridge approximation}
\label{alg:main}
\end{algorithm}

When working with a relatively high order polynomial basis,
we have found a small Armijo tolerance $\beta$, such as $\beta=10^{-6}$, is necessary.
As the polynomial degree increases, the rapid oscillation of these polynomials
causes the gradient to grow rapidly and without a small tolerance, all but the tiniest steps are rejected.

In addition, our implementation equips this algorithm with three different convergence criteria:
small change between $\ma U$ and $\ma U_+$ as measured by the smallest canonical subspace angle,
small change in the norm of the residual, and
small gradient norm.
Further, the algorithm places a feasibility constraint on the polynomial degree and subspace dimension.
A linear polynomial ($p=1$) with any dimensional subspace is equivalent to a ridge function on a one-dimensional subspace;
namely, if $\ma U\in \R^{m\times n}$ spans an $n$-dimensional subspace, then
\begin{equation}
	g(\ma U^\trans \ve x) = c_0 + c_1 \ma U_{\cdot,1}^\trans \ve x + \ldots + c_n \ma U_{\cdot, n}^\trans \ve x 
				= c_0 + \left( \sum_{k=1}^n c_k \ma U_{\cdot,k} \right) \ve x
				= c_0 + \widehat{c}_1 \hma U^\trans \ve x,
\end{equation}
where $\hma U \in \R^{m\times 1}$ spans a one-dimensional subspace.

\section{Examples\label{sec:examples}}
To demonstrate the effectiveness of our algorithm for ridge approximation 
we apply it to a mixture of synthetic and application problems.
Code generating these examples along with an implementation of \cref{alg:main} are available at
{\tt \url{https://github.com/jeffrey-hokanson/varproridge}}.
First we compare the proposed algorithm to the alternating approach of~\cite{CEHW17}
on a problem where the ridge function is known a priori and examine both
convergence and wall clock time.
Next, we demonstrate that even though finding the ridge approximation is a non-convex problem,
the proposed algorithm frequently finds the global minimizer from a random initialization.
Then we study how rapidly the ridge approximation identifies the active subspace of a test problem
as the number of samples grows and compare these results to gradient based approaches for estimating the active subspace.
Finally, we apply the proposed algorithm to two application problems:
modeling lift and drag from a NACA0012 airfoil with 18 parameters
and modeling mean solution on the Neumann boundary of an elliptic PDE with 100-random coefficients.
We compare the accuracy of our ridge approximation to both Gaussian process and sparse surrogates
and find that polynomial ridge approximations provide more accurate approximations on
these application problems which exhibit ridge structure.

\subsection{Convergence of the optimization iteration}
As a first example, 
we compare the convergence of our proposed Gauss-Newton based algorithm 
to the alternating approach~\cite{CEHW17} on an examples
with and without a zero-residual solution.
The alternating approach switches between optimizing for the polynomial coefficients $\ve c$
and the subspace defined by $\ma U$ at each iteration; i.e.,
\begin{align}
	\ve c_k &\leftarrow \ma V(\ma U_{k-1})^+ \ve f \\
	\ma U_k &\leftarrow \minimize_{\Range \ma U \in \mathbb{G}(m,\R^n)}
			\frac12 \| \ve f - \ma V(\ma U)\ve c_k\|_2^2.
\end{align}
The implementation of Constantine et al.
allows the nonlinear optimization problem for $\ma U_k$ 
to be solved using multiple steps of steepest descent using {\tt pymanopt}~\cite{TKW16}.

\begin{figure}
\begin{tikzpicture}
	\begin{groupplot}[
		group style={group size = 2 by 2, group name=polynomial,
				vertical sep=10pt, horizontal sep=10pt,
			},
		xmin = 0, xmax = 40,
		ymode = log,
		ymax = 1, ymin = 1e-16,
		width = 0.52\linewidth, height = 0.35\linewidth, 
		title style = {yshift = -1ex}
		]
		\nextgroupplot[title = Gauss-Newton, ylabel = {$\| \ve r(\ma U_\ell) \|_2/\| \ve f\|_2$},
			ytickten = {-16, -14, ..., 2}, 
			xticklabels = {,,}]
		\pgfplotstableread{fig_polynomial_gauss_newton.dat}\gn
		\foreach \i in {0, 1, ..., 9}{
			\addplot[colorbrewerA1, mark = *, mark size = 1pt] table [x=iter, y expr = \thisrow{res\i}/ 192.70431446999882] {\gn};
		}
		
		\nextgroupplot[title = Alternating, ytickten = {-14, -12, ..., 2}, yticklabels = {,,}, xticklabels = {,,}]
		\pgfplotstableread{fig_polynomial_alternating.dat}\alternating
		\foreach \i in {0, 1, ..., 9}{
			\addplot[colorbrewerA2, mark = *, mark size = 1pt] table [x=iter, y expr = \thisrow{res\i}/ 192.70431446999882] {\alternating};
		}
		
		\nextgroupplot[ymin = 1e-2, ymax=1, xlabel = iteration, xlabel = iteration $\ell$, ylabel = {$\| \ve r(\ma U_\ell)\|_2/\|\ve f\|_2$}]
		\pgfplotstableread{fig_polynomial_noise_gauss_newton.dat}\gn
		\foreach \i in {0, 1, ..., 9}{
			\addplot[colorbrewerA1, mark = *, mark size = 1pt] table [x=iter, y expr = \thisrow{res\i}/192.482920096] {\gn};
		}
		\addplot[black, very thick] coordinates {(0, 3.03513499166/192.482920096) (50, 3.03513499166/192.482920096)};
		
		\nextgroupplot[ymin = 1e-2, ymax=1, yticklabels = {,,}, xlabel = iteration $\ell$ ]
		\pgfplotstableread{fig_polynomial_noise_alternating.dat}\alternating
		\foreach \i in {0, 1, ..., 9}{
			\addplot[colorbrewerA2, mark = *, mark size = 1pt] table [x=iter, y expr = \thisrow{res\i}/ 192.482920096] {\alternating};
		}
		\addplot[black, very thick] coordinates {(0, 3.03513499166/192.482920096) (50, 3.03513499166/192.482920096)};
			
	\end{groupplot}
	\node[anchor=south, rotate = 90, yshift = 40pt] at ($(polynomial c1r1.west)$){\footnotesize{zero residual}}; 
	\node[anchor=south, rotate = 90, yshift = 40pt] at ($(polynomial c1r2.west)$){\footnotesize{non-zero residual}}; 
\end{tikzpicture}
\caption{%
	A comparison of the per-iteration performance of our Gauss-Newton method (\cref{alg:main})
	and the alternating approach of Constantine, Eftekhari, Hokanson, and Ward~\cite[Alg.~2]{CEHW17}
	using $100$ steepest descent iterations per alternating iteration.
	The top rows compare these algorithms when finding a two variable cubic ridge function approximation
	from $M=1000$ uniform random samples of \cref{eq:convergence}.
	The bottom row shows the performance when independent and identically distributed Gaussian random noise was added to each sample of $f$
	to provide a non-zero residual solution;
	the black line indicates the norm of the noise introduced.
	}
\label{fig:convergence}
\end{figure}
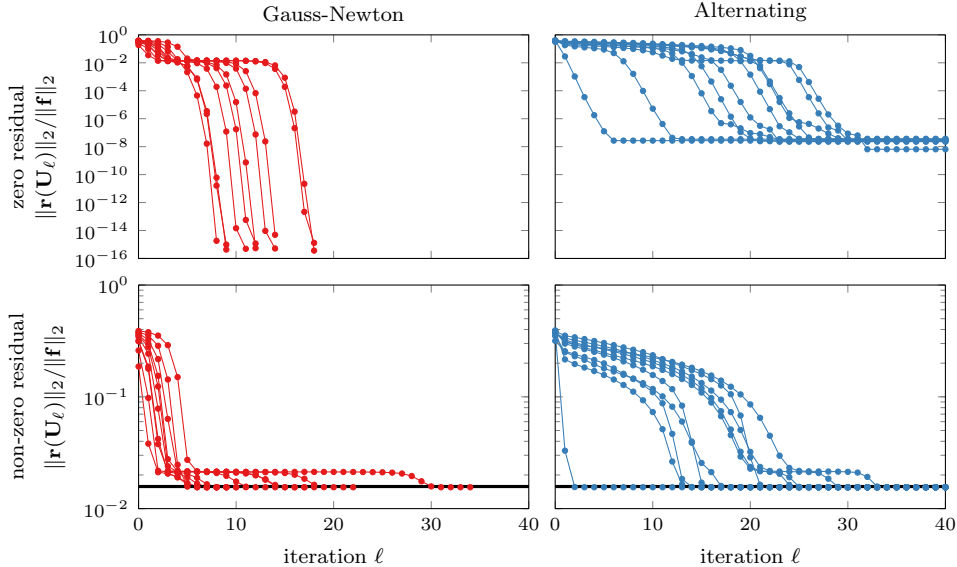

Existing results describe the convergence of both of these algorithms
for both the zero-residual and non-zero residual cases.
Iterates of a Gauss-Newton method
converging to a zero-residual solution do so quadratically
and those that converge to a non-zero residual solution do so only superlinearly~\cite[\S10.3]{NW06}.
Following Ruhe and Wedin~\cite[\S3]{RW80},
variable projection on a separable problem (in their notation, Algorithm I)
converges at the same rate as Gauss-Newton.
Hence, our ridge approximation algorithm should converge quadratically for zero residual problems
and superlinearly otherwise. % I think this due to the dropping of the second term in the Hessian
However an alternating approach (in their notation, Algorithm III),
such as the alternating approach for ridge approximation of Constantine et al.,
should converge at best only linearly.
Our numerical experiments support this analysis.
Figure~\ref{fig:convergence} compares the convergence of the alternating and Gauss-Newton approaches.
In this example, $f$ is taken to be a cubic ridge function on a two dimensional subspace:
\begin{equation}\label{eq:convergence}
	f: \set D \subset \R^{10} \to \R; \quad f(\ve x) = (\ve e_1^\trans \ve x)^2 + (\ve 1^\trans \ve x/10)^3 + 1; \quad
	\set D = [-1,1]^{10}.
\end{equation}
Sampling $M=1000$ points uniformly over the domain, 
Figure~\ref{fig:convergence} shows the per-iteration convergence history 
for ten different initializations of each algorithm.
As expected in the case with a zero-residual solution,
our proposed Gauss-Newton approach converges quadratically while the alternating approach converges only linearly.
The bottom row of this figure shows the case where Gaussian random noise with unit variance 
has been added to the function values $f(\ve x_i)$
ensuring there is not a zero-residual solution.
Although our method no longer converges quadratically, 
our method converges in fewer iterations on average than the alternating approach.

\Cref{fig:convergence} also exposes two interesting features of the optimization.
First, for both algorithms there is a plateau in the convergence history
at around $10^{-2}$ in the zero residual case and around $3\cdot 10^{-2}$ in the non-zero residual case.
For both algorithms, this happens when one of the two directions in the active subspace has been found.
Second, the alternating algorithm on the zero residual case stagnates with a residual around $10^{-8}$,
however the residual should be able to converge to an error of $10^{-14}$.
We suspect this is due to numerical issues in the implementation 
as the residual floor does not decrease as the termination criteria
are made more strict.

\subsection{Cost comparison}
\begin{table}
\footnotesize
\centering
\caption{Per iteration cost of Gauss-Newton and alternating based approaches,
including only $M$-dependent steps and treating evaluating the polynomial $\psi$ as an $\order(1)$ cost.
}
%Recall $M$ is the number of samples,
%$m$ the input dimension, 
%$n$ the ridge dimension, 
%$p$ the polynomial degree,
%and $N = {n + p \choose p}$ the dimension of the polynomial basis.
\label{tab:cost}
\begin{tabular}{llll}
	\toprule
	\multicolumn{2}{c}{Gauss-Newton} & \multicolumn{2}{c}{Alternating} \\
	\cmidrule(lr){1-2} 
	\cmidrule(lr){3-4} 
	step & cost & step & cost \\
	\midrule 
	fitting polynomial (SVD)& $\order(MN^2)$ &	
	fitting polynomial (QR)& $\order(MN^2)$ \\
	constructing Jacobian  & $\order(MNmn)$ &
	constructing Jacobian  & $\order(MNmn)$ \\
	Gauss-Newton direction & $\order(M(mn)^2)$ &
	steepest descent direction & $\order(MNmn)$ \\
	step acceptance & $\order(MN^2)$ &
	step acceptance & $\order(MN)$ \\
	\bottomrule
\end{tabular}

\end{table}

Comparing the cost of our variable projection Gauss-Newton based approach
to the alternating approach of Constantine et al.~\cite{CEHW17} is not simple.
Although both algorithms scale linearly with the number of samples $M$,
the constant multiplying $M$ depends on the dimension of the ridge approximation $n$, 
the polynomial degree $p$, the dimension of the polynomial basis $N={n+p \choose p}$,
and the dimension of the input space $m$.
\Cref{tab:cost} lists the dominant costs in each algorithm.
This analysis motivates using multiple steepest descent steps 
with a fixed polynomial per alternating iteration
as this step is cheap compared to the cost of the polynomial fitting step.
This cost analysis also suggests that for high dimensional input spaces,
the alternating approach may be cheaper as computing the descent direction
scales like $m^2$ in the Gauss-Newton approach vs $m$ in the steepest descent approach.
However, which algorithm is faster for a given set of $m$, $n$, and $p$
depends on the constants associated with each algorithm, which this analysis neglects.

\begin{table}
	\caption{Median wall clock time in seconds from ten replicates
		of each algorithm applied to identical zero-residual data consisting of $M=1000$ uniform samples of $f_{n,p}$ from~(\ref{eq:timing}),
		initialized with the same random subspace $\ma U$,
		and stopped when the normalized residual reached $10^{-5}$,
		which both algorithms achieved in \cref{fig:convergence}.
		The alternating method times represent the shortest time
		when using 1, 10, or 100 steps of steepest descent per alternating iteration.
		Experiments were conducted on a 2013 Mac Pro with a six-core Intel Xeon CPU E5-1650 v2 clocked at 3.50GHz
		with 16GB RAM.
	}
	\label{tab:timing}
	\footnotesize
	\centering
	\pgfplotstableset{fixed,fixed zerofill, precision=3,
		create on use/m1a/.style={create col/copy column from table={fig_timing_alt_best.dat}{m1}},
		create on use/m2a/.style={create col/copy column from table={fig_timing_alt_best.dat}{m2}},
		create on use/m3a/.style={create col/copy column from table={fig_timing_alt_best.dat}{m3}},
		create on use/m4a/.style={create col/copy column from table={fig_timing_alt_best.dat}{m4}},
		create on use/m5a/.style={create col/copy column from table={fig_timing_alt_best.dat}{m5}},
	}
	\pgfplotstabletypeset[
		columns/degree/.style={int detect, column name = {}},
		every head row/.style={
			before row={\toprule%
				$p$ & 
				\multicolumn{5}{c}{Gauss-Newton} &
				\multicolumn{5}{c}{Alternating}\\
				\cmidrule(lr){2-6}
				\cmidrule(lr){7-11}
			},
			after row={\midrule},
		},
		columns/m1/.style={column name={$n\!=\!1$}},
		columns/m2/.style={column name={$n\!=\!2$}},
		columns/m3/.style={column name={$n\!=\!3$}},
		columns/m4/.style={column name={$n\!=\!4$}},
		columns/m5/.style={column name={$n\!=\!5$}},
		columns/m1a/.style={column name={$n\!=\!1$}, precision = 1},
		columns/m2a/.style={column name={$n\!=\!2$}, precision = 1},
		columns/m3a/.style={column name={$n\!=\!3$}, precision = 1},
		columns/m4a/.style={column name={$n\!=\!4$}, precision = 1},
		columns/m5a/.style={column name={$n\!=\!5$}, precision = 1},
		columns = {degree, m1, m2, m3, m4, m5, m1a, m2a, m3a, m4a, m5a},
		every last row/.style={after row=\bottomrule},
	]{fig_timing_gn.dat}	
\end{table}

To get a sense of which algorithm is faster in practice, 
we compare the wall clock time of each algorithm on a $10$-dimensional test function
\begin{equation}\label{eq:timing}
	f_{n,p}: \set D\subset \R^{10} \to \R; \quad
	f_{n,p}(\ve x) = (\ve 1^\trans \ve x)^p + \sum_{j=1}^{n-1} (\ve e_j^\trans \ve x)^{p-1}; \quad
	\set D = [-1,1]^{10}
\end{equation}
fitting $M=1000$ points for a variety of polynomial degrees $p$
and subspace dimensions $n$
as show in \cref{tab:timing}.
As these results show, our proposed algorithm is significantly faster when measured in wall clock time.
Part of this improvement is due the quadratic convergence 
of our Gauss-Newton method that requires fewer iterations to terminate compared to the linearly convergent alternating method.
However, a nontrivial portion of performance difference is due to our more careful implementation
that is tightly coupled to the Grassmann optimization
and exploits the structure of the Jacobian revealed in \cref{thm:orth}.

\subsection{Convergence to the global minimizer}
As the polynomial ridge approximation problem~\cref{eq:pra_opt} is not necessarily a convex problem,
one concern is that our proposed algorithm might converge to a spurious local minimizer
rather than the global minimizer.
However, our numerical results suggest our algorithm frequently converges to the global minimizer regardless
of initial subspace estimate.
As an example, we consider a quadratic ridge approximation on an $n$-dimensional subspace
built from $M=1000$ samples of 
\begin{equation}\label{eq:initial}
	f_n: \set D\subset \R^{10} \to \R ; \quad
	f_n(\ve x) = \sum_{j=1}^n (\ve e_j^\trans \ve x)^2; \quad
	\set D = [-1,1]^{10}.
\end{equation}
As this function is a polynomial ridge function, 
we can assess if the algorithm has correctly converged if the residual is near zero.
Table~\ref{tab:initial} shows the frequency with which our proposed algorithm terminated with 
an incorrect subspace.
In general, this frequency is low and so it should be sufficient in many practical cases to
try multiple random initializations, taking the best to ensure convergence to an approximate
global minimizer.
\begin{table}
	\caption{Probability of not finding the global minimizer 
	given a random initial subspace of dimension $n$.
	This probability was estimated from $1000$ trials of Algorithm~\ref{alg:main}
	fitting a quadratic ridge function of $n$ variables to 
	$M=1000$ samples of $f(\ve x)$ from \cref{eq:initial} taken uniformly over the domain.
	}
	\label{tab:initial}
	\small
	\setlength{\tabcolsep}{5pt}
	\centering
	\pgfplotstabletypeset[
		clear infinite,
		columns/d1/.style={column name = {$n=1$}},
		columns/d2/.style={column name = {$n=2$}},
		columns/d3/.style={column name = {$n=3$}},
		columns/d4/.style={column name = {$n=4$}},
		columns/d5/.style={column name = {$n=5$}},
		columns/d6/.style={column name = {$n=6$}},
		columns/d7/.style={column name = {$n=7$}},
		columns/d8/.style={column name = {$n=8$}},
		columns/d9/.style={column name = {$n=9$}},
		columns/d10/.style={column name = {$n=10$}},
		every column/.style = {
			column type=c,
			fixed, fixed zerofill, precision =1,
			dec sep align,
	        preproc/expr={100*##1},
	        postproc cell content/.append code={
    	    \ifnum1=\pgfplotstablepartno
        	   \pgfkeysalso{@cell content/.add={}{\%}}%
            \fi
	        },
		},
		columns = {d1, d2, d3, d4, d5, d6, d7, d8, d9, d10},
		every head row/.style={
			before row={\toprule},
			after row={\midrule},
		},
		every last row/.style={after row=\bottomrule},
	]{fig_initial.dat}	
\end{table}

\subsection{Convergence to the active subspace with increasing samples}
As one goal of building a ridge approximation is to construct an inexpensive surrogate
of an expensive function $f$, an important feature of this approximation 
is how many samples of this function are required to construct a good surrogate.
Here we compare the ridge subspace found by our polynomial ridge approximation
and the active subspace computed from the outer product of (approximate) gradients~\cite{Con15}
to the true active subspace of a toy problem
consisting of a one-dimensional quadratic ridge function plus 
low-amplitude sinusoidal oscillations:
\begin{equation}\label{eq:samples}
	f: [-1,1]^m \to \R, \qquad
	f(\ve x) = \frac12 (\ve 1^\trans \ve x)^2 + \alpha \sum_{j=1}^m \cos (\beta \pi [\ve x]_j),
	\qquad \alpha > 0, \quad \beta \in \mathbb{N}.
\end{equation}
These oscillations are necessary so that $f$ does not have an exact one-dimensional ridge structure,
in which case both the polynomial ridge approximation and the active subspace approach 
will correctly estimate the active subspace with only $M=m+3$ samples 
unless $\lbrace \ve x_i \rbrace_{i=1}^M$ is adversely chosen.
For this toy problem, the outer-product of gradients matrix has a closed form expression:
\begin{equation}
	\ma C := \int_\set D (\nabla f(\ve x)) (\nabla f(\ve x))^\trans \D \mu(\ve x)
		= \ve 1 \ve 1^\trans + (\alpha \beta \pi)^2 \ma I.
\end{equation}
The leading eigenvector of this matrix is always the ones vector for any value of $\alpha$ and $\beta$,
but $\alpha$ and $\beta$ affect each approach differently.
By increasing $\alpha$, the `noise'---the response that cannot be explained by a one-dimensional ridge approximation---increases.
By increasing $\beta$, the frequency of oscillations increase,
increasing the oscillations in the gradients appearing in $\ma C$.
Together, $\alpha$ and $\beta$ determine the first and second eigenvalues of $\ma C$,
namely $m + (\alpha\beta\pi)^2$ and $(\alpha\beta\pi)^2$,
and the relative eigenvalue gap determines the convergence of the Monte Carlo estimate of $\ma C$, $\hma C$~\cite[Cor.~3.10]{Con15}:
\begin{equation}
	\hma C :=\frac{1}{L} \sum_{i=1}^L (\nabla f(\ve x_i))(\nabla f(\ve x_i))^\trans.
\end{equation}

\begin{figure}
\centering 
\noindent
\begin{tikzpicture}
	
	\begin{groupplot}[
		group style = {group size = 2 by 2},
		height = 0.4\textwidth, 
		width = 0.5\textwidth, 
		xmin = 100, xmax = 1020,
		ymin = 0, ymax = 90,
		xtick = {100,200, 400, ..., 1000},
		ytick = {0,10,20,..., 90},
		domain = 100:1050,
		samples = 500,
		title style = {yshift=-7pt,},
		]
		\nextgroupplot[title = {$\alpha = 0.02$ $\beta = 1$}, ylabel = {subspace error, degrees},
			legend to name=samples]
		\addplot[colorbrewerA1, thick] table [x= M, y = p50] {fig_speed_pra_uniform_0.02_1.dat}
			[anchor = north east] node [pos = 1,yshift = 3pt] {\tiny uniform random};
		\addplot[colorbrewerA3, very thick] table [x= M, y = p50] {fig_speed_pra_cheat_0.02_1.dat}
			[anchor = south east] node [pos = 0.4] {\tiny ridge sample};
		\addplot[colorbrewerA2, thick] table [x= M, y = p50] {fig_speed_fd_grad_0.02_1.dat}
			[anchor = south east] node [pos = 1, yshift = 5pt] {\tiny active subspace};

		%\addlegendentry{ridge approximation, uniform random samples}
		%\addlegendentry{ridge approximation, uniform along ridge}
		%\addlegendentry{finite difference active subspace}
		% 1/sqrt(n) grid

		%\foreach \i in {100,200,..., 2000}{	
		%	\addplot[black, opacity = 0.2] {\i/sqrt(x)};
		%}

		\addplot[name path=prau1, draw=none] table [x=M,y=p25] {fig_speed_pra_uniform_0.02_1.dat}; 
		\addplot[name path=prau2, draw=none] table [x=M,y=p75] {fig_speed_pra_uniform_0.02_1.dat}; 
		\addplot[colorbrewerA1, opacity = 0.3] fill between [of = prau1 and prau2];
		
		\addplot[name path=fd1, draw=none] table [x=M,y=p25] {fig_speed_fd_grad_0.02_1.dat}; 
		\addplot[name path=fd2, draw=none] table [x=M,y=p75] {fig_speed_fd_grad_0.02_1.dat}; 
		\addplot[colorbrewerA2, opacity = 0.3] fill between [of = fd1 and fd2];
		
		\addplot[name path=cheat1, draw=none] table [x=M,y=p25] {fig_speed_pra_cheat_0.02_1.dat}; 
		\addplot[name path=cheat2, draw=none] table [x=M,y=p75] {fig_speed_pra_cheat_0.02_1.dat}; 
		\addplot[colorbrewerA3, opacity = 0.3] fill between [of = cheat1 and cheat2];

		\nextgroupplot[title = {$\alpha = 0.02$, $\beta = 5$}]
		\addplot[colorbrewerA1, thick] table [x= M, y = p50] {fig_speed_pra_uniform_0.02_5.dat}
			[anchor = south east] node [pos = 1,yshift = 3pt] {\tiny uniform random};
		\addplot[colorbrewerA3, very thick] table [x= M, y = p50] {fig_speed_pra_cheat_0.02_5.dat}
			[anchor = south east] node [pos = 0.4] {\tiny ridge sample};
		\addplot[colorbrewerA2, thick] table [x= M, y = p50] {fig_speed_fd_grad_0.02_5.dat}
			[anchor = south east] node [pos = 1, yshift = 15pt] {\tiny active subspace};

		\addplot[name path=prau1, draw=none] table [x=M,y=p25] {fig_speed_pra_uniform_0.02_5.dat}; 
		\addplot[name path=prau2, draw=none] table [x=M,y=p75] {fig_speed_pra_uniform_0.02_5.dat}; 
		\addplot[colorbrewerA1, opacity = 0.3] fill between [of = prau1 and prau2];
		
		\addplot[name path=fd1, draw=none] table [x=M,y=p25] {fig_speed_fd_grad_0.02_5.dat}; 
		\addplot[name path=fd2, draw=none] table [x=M,y=p75] {fig_speed_fd_grad_0.02_5.dat}; 
		\addplot[colorbrewerA2, opacity = 0.3] fill between [of = fd1 and fd2];
		
		%\addplot[name path=cheat1, draw=none] table [x=M,y=p25] {fig_speed_pra_cheat_0.004_1.dat}; 
		%\addplot[name path=cheat2, draw=none] table [x=M,y=p75] {fig_speed_pra_cheat_0.004_1.dat}; 
		%\addplot[colorbrewerA3, opacity = 0.3] fill between [of = cheat1 and cheat2];

		\nextgroupplot[title = {$\alpha = 0.004$, $\beta = 1$}, xlabel = {samples, $M$}, ylabel = {subspace error, degrees}]
		
		\addplot[colorbrewerA1, thick] table [x= M, y = p50] {fig_speed_pra_uniform_0.004_1.dat}
			[anchor = south west] node [pos = 0.4,yshift = 0pt] {\tiny uniform random};
		\addplot[colorbrewerA3, very thick] table [x= M, y = p50] {fig_speed_pra_cheat_0.004_1.dat}
			[anchor = south west] node [pos = 0.0, yshift = -5pt] {\tiny ridge sample};
		\addplot[colorbrewerA2, thick] table [x= M, y = p50] {fig_speed_fd_grad_0.004_1.dat}
			[anchor = south east] node [pos = 1, yshift = 0pt] {\tiny active subspace};

		\addplot[name path=prau1, draw=none] table [x=M,y=p25] {fig_speed_pra_uniform_0.004_1.dat}; 
		\addplot[name path=prau2, draw=none] table [x=M,y=p75] {fig_speed_pra_uniform_0.004_1.dat}; 
		\addplot[colorbrewerA1, opacity = 0.3] fill between [of = prau1 and prau2];
		
		\addplot[name path=fd1, draw=none] table [x=M,y=p25] {fig_speed_fd_grad_0.004_1.dat}; 
		\addplot[name path=fd2, draw=none] table [x=M,y=p75] {fig_speed_fd_grad_0.004_1.dat}; 
		\addplot[colorbrewerA2, opacity = 0.3] fill between [of = fd1 and fd2];
		
		\addplot[name path=cheat1, draw=none] table [x=M,y=p25] {fig_speed_pra_cheat_0.004_1.dat}; 
		\addplot[name path=cheat2, draw=none] table [x=M,y=p75] {fig_speed_pra_cheat_0.004_1.dat}; 
		\addplot[colorbrewerA3, opacity = 0.3] fill between [of = cheat1 and cheat2];

		\nextgroupplot[title = {$\alpha = 0.004$, $\beta = 5$}, xlabel = {samples, $M$}]

		\addplot[colorbrewerA1, thick] table [x= M, y = p50] {fig_speed_pra_uniform_0.004_5.dat}
			[anchor = south east] node [pos = 1,yshift = 0pt] {\tiny uniform random};
		\addplot[colorbrewerA3, very thick] table [x= M, y = p50] {fig_speed_pra_cheat_0.004_5.dat}
			[anchor = south west] node [pos = 0.0, yshift = -5pt] {\tiny ridge sample};
		\addplot[colorbrewerA2, thick] table [x= M, y = p50] {fig_speed_fd_grad_0.004_5.dat}
			[anchor = south east] node [pos = 1, yshift = 5pt] {\tiny active subspace};

		\addplot[name path=prau1, draw=none] table [x=M,y=p25] {fig_speed_pra_uniform_0.004_5.dat}; 
		\addplot[name path=prau2, draw=none] table [x=M,y=p75] {fig_speed_pra_uniform_0.004_5.dat}; 
		\addplot[colorbrewerA1, opacity = 0.3] fill between [of = prau1 and prau2];
		
		\addplot[name path=fd1, draw=none] table [x=M,y=p25] {fig_speed_fd_grad_0.004_5.dat}; 
		\addplot[name path=fd2, draw=none] table [x=M,y=p75] {fig_speed_fd_grad_0.004_5.dat}; 
		\addplot[colorbrewerA2, opacity = 0.3] fill between [of = fd1 and fd2];
		
		\addplot[name path=cheat1, draw=none] table [x=M,y=p25] {fig_speed_pra_cheat_0.004_5.dat}; 
		\addplot[name path=cheat2, draw=none] table [x=M,y=p75] {fig_speed_pra_cheat_0.004_5.dat}; 
		\addplot[colorbrewerA3, opacity = 0.3] fill between [of = cheat1 and cheat2];
	\end{groupplot}
	%\coordinate (c3) at ($(c1)!.5!(c2)$);
	%\node[below] at (c3 |- current bounding box.south) {\pgfplotslegendfromname{samples}};
\end{tikzpicture}
\caption{The subspace angle between the active subspace estimate $\ma U$ and its true value $\hma U = \ve 1/10$
	using $M$ samples with various methods for multiple random samples.
	In this example we use $f$ given in~\cref{eq:samples} on $\set D = [-1,1]^{100}$. 
	For each method, the median error is shown by a thick line and the 50\% interval by the shaded region.
	The active subspace estimate was computed using a one-sided finite difference from points randomly selected in the domain.
	The other two lines show the polynomial ridge approximation of degree $2$ constructed from different sampling schemes.
	The uniform random scheme chose points on $\set D$ with uniform weight
	whereas the ridge sample scheme chose points evenly spaced along the true active subspace
	and randomly with uniform probability in the orthogonal complement.
	} 
\label{fig:speed}
\end{figure}
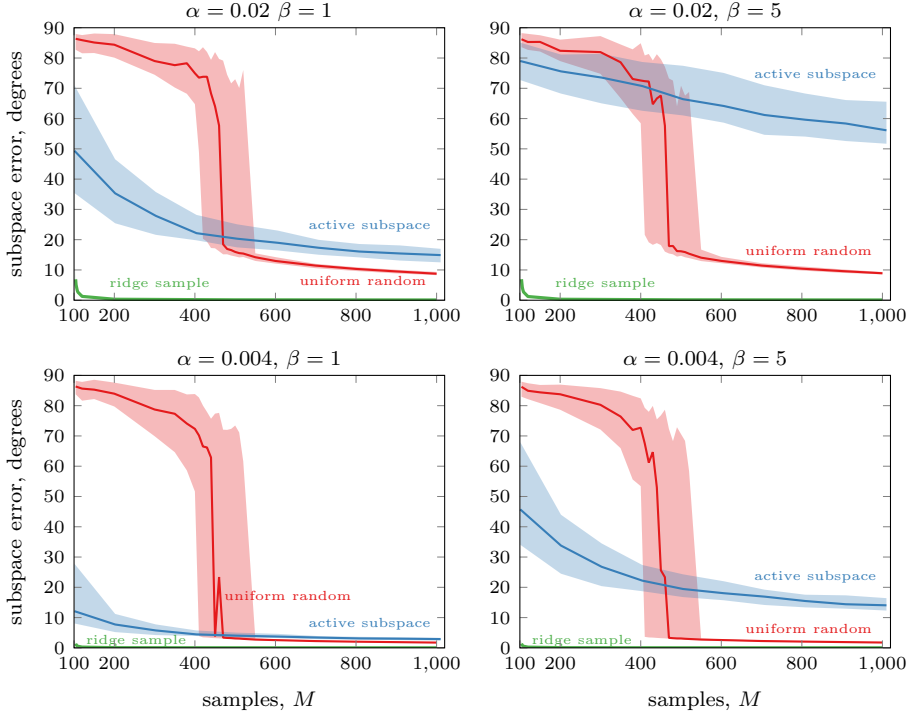

To provide a fair comparison in terms of function evaluations, we compare 
our polynomial ridge approximation approach to Monte Carlo estimate of the active subspace
using a finite difference approximation of the gradient:
\begin{equation}
	\tma C := \frac{1}{L}\sum_{i = 1}^{L} (\widetilde{\nabla} f(\ve x_i) ) (\widetilde{\nabla} f(\ve x_i))^\trans,
	\quad [\widetilde{\nabla} f(\ve x_i)]_j = \frac{ f(\ve x_i + h\ve e_j) - f(\ve x_i)}{h}.
\end{equation}
Although a finite difference gradient is used here, $f$ is smooth so the error introduced by the finite difference gradient is negligible.
As seen in \cref{fig:speed}, the Monte Carlo estimate converges at the expected $\order(M^{-1/2})$ rate.
When the relative gap remains the same, as in $\alpha = 0.02, \ \beta = 1$ and $\alpha = 0.004, \ \beta = 5$ cases of~\cref{eq:samples},
the convergence of the gradient based active subspace estimate is similar.

In contrast to the Monte Carlo estimate of the active subspace, 
the polynomial ridge approximation displays an interesting plateau, followed by apparent $\order(M^{-1/2})$ convergence.
During the plateau, the large angle with the true subspace is not (primarily) an artifact of a local minima;
even initializing with the true active subspace yields a subspace with large angles to the true active subspace.
This suggests that this plateau is likely due, loosely, to the information contained in the samples.
The later $\order(M^{-1/2})$ convergence we conjecture is due to the interpretation of the discrete least squares problem
as a Monte-Carlo approximation of the \emph{continuous} least squares problem:
\begin{equation}\label{eq:L2_err}
	\minimize_{\substack{g \in \mathbb{P}^p(\R^n) \\ \Range \ma U \in \mathbb{G}(n,\R^m)}}
		\int_{\set D} | f(\ve x) - g(\ma U^\trans \ve x)|^2 \D \mu(\ve x).
\end{equation}
The second ridge approximation built from samples uniformly along the true active subspace
suggests that a better sampling scheme the ridge approximation can convergence more rapidly.

\subsection{NACA airfoil\label{sec:examples:naca}}
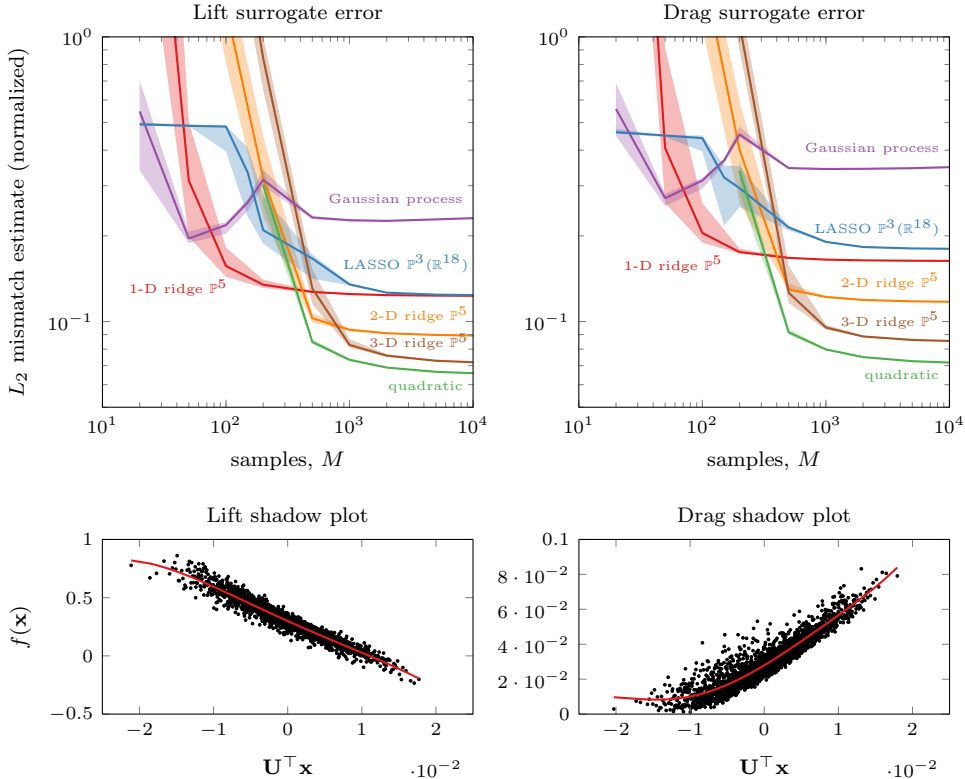
\begin{figure}
\noindent
\centering
\begin{tikzpicture}
	\begin{groupplot}[
		group style = {group size = 2 by 2, horizontal sep = 4em, vertical sep = 5em},
		width = 0.5\textwidth,
		height = 0.5\textwidth,
		xmode = log,
		ymode = log,
		ymax = 1,
		ymin = 5e-2,
		xmin = 10,
		xlabel = {samples, $M$},
		ylabel = {$L_2$ mismatch estimate (normalized)},
		title style = {yshift = -1ex},
		]
		\nextgroupplot[
			title = Lift surrogate error,
		]	
		\addplot[colorbrewerA1, thick] table [x = M, y = p50] {fig_naca_lift_pra_1_5.dat}
			[anchor = north east] node [pos = 0.5, yshift = 1pt] {\tiny 1-D ridge $\mathbb{P}^5$ };
		\addplot[name path=pra1, draw = none] table [x = M, y = p25] {fig_naca_lift_pra_1_5.dat};
		\addplot[name path=pra2, draw = none] table [x = M, y = p75] {fig_naca_lift_pra_1_5.dat};
		\addplot[colorbrewerA1, opacity = 0.3] fill between [of = pra1 and pra2];
		
		\addplot[colorbrewerA5, thick] table [x = M, y = p50] {fig_naca_lift_pra_2_5.dat}
			[anchor = south east] node [pos = 1, yshift = 1pt, xshift = 2pt] {\tiny 2-D ridge $\mathbb{P}^5$ };
		\addplot[name path=pra1, draw = none] table [x = M, y = p25] {fig_naca_lift_pra_2_5.dat};
		\addplot[name path=pra2, draw = none] table [x = M, y = p75] {fig_naca_lift_pra_2_5.dat};
		\addplot[colorbrewerA5, opacity = 0.3] fill between [of = pra1 and pra2];
		
		\addplot[colorbrewerA7, thick] table [x = M, y = p50] {fig_naca_lift_pra_3_5.dat}
			[anchor = south east] node [pos = 1, yshift = 1pt,xshift=2pt] {\tiny 3-D ridge $\mathbb{P}^5$ };
		\addplot[name path=pra1, draw = none] table [x = M, y = p25] {fig_naca_lift_pra_3_5.dat};
		\addplot[name path=pra2, draw = none] table [x = M, y = p75] {fig_naca_lift_pra_3_5.dat};
		\addplot[colorbrewerA7, opacity = 0.3] fill between [of = pra1 and pra2];
		
		\addplot[colorbrewerA4, thick] table [x = M, y = p50] {fig_naca_lift_gp.dat}
			[anchor = south east] node [pos = 1, yshift = 1pt] {\tiny Gaussian process };
		\addplot[name path=pra1, draw = none] table [x = M, y = p25] {fig_naca_lift_gp.dat};
		\addplot[name path=pra2, draw = none] table [x = M, y = p75] {fig_naca_lift_gp.dat};
		\addplot[colorbrewerA4, opacity = 0.3] fill between [of = pra1 and pra2];
		
		\addplot[colorbrewerA2, thick] table [x = M, y = p50] {fig_naca_lift_sp.dat}
			[anchor = south east] node [pos = 1, yshift = 5pt, xshift = 2pt] {\tiny LASSO $\mathbb{P}^3(\R^{18})$ };
		\addplot[name path=pra1, draw = none] table [x = M, y = p25] {fig_naca_lift_sp.dat};
		\addplot[name path=pra2, draw = none] table [x = M, y = p75] {fig_naca_lift_sp.dat};
		\addplot[colorbrewerA2, opacity = 0.3] fill between [of = pra1 and pra2];

		\addplot[colorbrewerA3, thick] table [x = M, y = p50] {fig_naca_lift_quad.dat}
			[anchor = north east] node [pos = 1, yshift = 1pt] {\tiny quadratic };
		\addplot[name path=pra1, draw = none] table [x = M, y = p25] {fig_naca_lift_quad.dat};
		\addplot[name path=pra2, draw = none] table [x = M, y = p75] {fig_naca_lift_quad.dat};
		\addplot[colorbrewerA3, opacity = 0.3] fill between [of = pra1 and pra2];

		%\addplot[colorbrewerA3, thick] table [x = M, y = p50] {fig_naca_lift_line.dat}
		%	[anchor = south east] node [pos = 1, yshift = 1pt] {\tiny linear };
		%\addplot[name path=pra1, draw = none] table [x = M, y = p25] {fig_naca_lift_line.dat};
		%\addplot[name path=pra2, draw = none] table [x = M, y = p75] {fig_naca_lift_line.dat};
		%\addplot[colorbrewerA3, opacity = 0.3] fill between [of = pra1 and pra2];

		\nextgroupplot[title = Drag surrogate error, ylabel = {},]

		\addplot[colorbrewerA1, thick] table [x = M, y = p50] {fig_naca_drag_pra_1_5.dat}
			[anchor = north east] node [pos = 0.6, yshift = 1pt] {\tiny 1-D ridge $\mathbb{P}^5$ };
		\addplot[name path=pra1, draw = none] table [x = M, y = p25] {fig_naca_drag_pra_1_5.dat};
		\addplot[name path=pra2, draw = none] table [x = M, y = p75] {fig_naca_drag_pra_1_5.dat};
		\addplot[colorbrewerA1, opacity = 0.3] fill between [of = pra1 and pra2];
		
		\addplot[colorbrewerA5, thick] table [x = M, y = p50] {fig_naca_drag_pra_2_5.dat}
			[anchor = south east] node [pos = 1, yshift = 1pt] {\tiny 2-D ridge $\mathbb{P}^5$ };
		\addplot[name path=pra1, draw = none] table [x = M, y = p25] {fig_naca_drag_pra_2_5.dat};
		\addplot[name path=pra2, draw = none] table [x = M, y = p75] {fig_naca_drag_pra_2_5.dat};
		\addplot[colorbrewerA5, opacity = 0.3] fill between [of = pra1 and pra2];
		
		\addplot[colorbrewerA7, thick] table [x = M, y = p50] {fig_naca_drag_pra_3_5.dat}
			[anchor = south east] node [pos = 1, yshift = 1pt] {\tiny 3-D ridge $\mathbb{P}^5$ };
		\addplot[name path=pra1, draw = none] table [x = M, y = p25] {fig_naca_drag_pra_3_5.dat};
		\addplot[name path=pra2, draw = none] table [x = M, y = p75] {fig_naca_drag_pra_3_5.dat};
		\addplot[colorbrewerA7, opacity = 0.3] fill between [of = pra1 and pra2];
		
		\addplot[colorbrewerA4, thick] table [x = M, y = p50] {fig_naca_drag_gp.dat}
			[anchor = south east] node [pos = 1, yshift = 1pt] {\tiny Gaussian process };
		\addplot[name path=pra1, draw = none] table [x = M, y = p25] {fig_naca_drag_gp.dat};
		\addplot[name path=pra2, draw = none] table [x = M, y = p75] {fig_naca_drag_gp.dat};
		\addplot[colorbrewerA4, opacity = 0.3] fill between [of = pra1 and pra2];
		
		\addplot[colorbrewerA2, thick] table [x = M, y = p50] {fig_naca_drag_sp.dat}
			[anchor = south east] node [pos = 1, yshift = 1pt] {\tiny LASSO $\mathbb{P}^3(\R^{18})$};
		\addplot[name path=pra1, draw = none] table [x = M, y = p25] {fig_naca_drag_sp.dat};
		\addplot[name path=pra2, draw = none] table [x = M, y = p75] {fig_naca_drag_sp.dat};
		\addplot[colorbrewerA2, opacity = 0.3] fill between [of = pra1 and pra2];

		\addplot[colorbrewerA3, thick] table [x = M, y = p50] {fig_naca_drag_quad.dat}
			[anchor = north east] node [pos = 1, yshift = 1pt] {\tiny quadratic };
		\addplot[name path=pra1, draw = none] table [x = M, y = p25] {fig_naca_drag_quad.dat};
		\addplot[name path=pra2, draw = none] table [x = M, y = p75] {fig_naca_drag_quad.dat};
		\addplot[colorbrewerA3, opacity = 0.3] fill between [of = pra1 and pra2];

		%\addplot[colorbrewerA3, thick] table [x = M, y = p50] {fig_naca_drag_line.dat}
		%	[anchor = south east] node [pos = 1, yshift = 1pt] {\tiny linear };
		%\addplot[name path=pra1, draw = none] table [x = M, y = p25] {fig_naca_drag_line.dat};
		%\addplot[name path=pra2, draw = none] table [x = M, y = p75] {fig_naca_drag_line.dat};
		%\addplot[colorbrewerA3, opacity = 0.3] fill between [of = pra1 and pra2];
		\nextgroupplot[
			height = 0.3\textwidth,
			title = Lift shadow plot,
			xlabel = {$\ma U^\trans \ve x$},
			ylabel = $f(\ve x)$,
			xmode = linear,
			ymode = linear,
			xmin = -0.025,
			xmax = 0.025,
			ymin = -0.5,
			ymax = 1,
			clip mode=individual,
		]
		\addplot[black, only marks, mark size = 0.5pt] table [x=UX, y = fX] {fig_naca_lift_ridge.dat};
		\addplot[colorbrewerA1, thick] table [x=UX, y = y] {fig_naca_lift_ridge.dat};

		\nextgroupplot[
			height = 0.3\textwidth,
			title = Drag shadow plot,
			xlabel = {$\ma U^\trans \ve x$},
			ylabel = {},
			%ylabel = $f(\ve x)$,
			xmode = linear,
			ymode = linear,
			xmin = -0.025,
			xmax = 0.025,
			ymin = 0,
			ymax = 0.1,
			clip mode=individual,
		]
		\addplot[black, only marks, mark size = 0.5pt] table [x=UX, y = fX] {fig_naca_drag_ridge.dat};
		\addplot[colorbrewerA1, thick] table [x=UX, y = y] {fig_naca_drag_ridge.dat};
	\end{groupplot}
\end{tikzpicture}

\caption{The top two plots show the estimated $L_2$ error using Monte Carlo integration for several different surrogate models
applied to the lift and drag of a NACA0012 airfoil as described in \cref{sec:examples:naca}.
The solid line indicates the median mismatch and the shaded region encloses the $25$th to $75$th percentile
of mismatch from 100 fits using randomly selected samples.
The bottom two plots show the shadow of these high dimensional points onto the 1-D ridge subspace fit with a 5th degree polynomial.
}
\label{fig:naca}
\end{figure}
As a first demonstration of our algorithm on an application problem,
we consider an 18-parameter model of a NACA0012 airfoil with two quantities of interest:
the nondimensionalized lift and drag coefficients.
This model from~\cite[\S5.3.1]{Con15} depends on 18 Hicks-Henne parameters that modify the airfoil geometry
and both lift and drag are computed using the Stanford University Unstructured (SU2) computational fluid dynamics code~\cite{economon2016}.
\Cref{fig:naca} shows the estimated $L_2$ mismatch 
normalized by the $L_2$ norm of $f$; 
both integrals are estimated using Monte Carlo, cf.~\cref{eq:L2_err}.
In addition to polynomial ridge approximations of various subspace dimensions,
this example compares two other surrogate models:
a Gaussian process model using {\tt sklearn}'s {\tt GaussanProcessRegressor}
and a global cubic model with a sparsity encouraging $\ell_1$ penalty using {\tt LassoCV}
which includes cross-validation to pick the regularization parameter~\cite{sklearn}.
As these results show, a 1-D polynomial ridge approximation does well with limited samples,
providing a better surrogate than either a Gaussian process or a sparse approximation.
However, increasing the subspace dimension does not significantly improve the fit
and quadratic polynomial using all the input coordinates provides the best surrogate with a large number of samples.

\subsection{Elliptic PDE\label{sec:examples:pde}}
\begin{figure}
\noindent
\setlength\tabcolsep{2pt}
\begin{tabular}[t]{ll}
\begin{tikzpicture}
	\begin{groupplot}[
		group style = {group size = 1 by 2, vertical sep = 3.5em},
		title style = {yshift = -2ex},
		xlabel style = {yshift = 1ex},
	]
	\nextgroupplot[	
		width=0.47\textwidth,
		height =0.35\textwidth,
		title = Subspace shadow plot,
		xlabel = {$\ma U^\trans \ve x$},
		ylabel = $f(\ve x)$,
		ymin = 0,
		clip mode=individual,
	]
	\addplot[black, only marks, mark size = 0.5pt] table [x=UX, y = fX] {fig_pde_ridge.dat};
	\addplot[colorbrewerA1, thick] table [x=UX, y = y] {fig_pde_ridge.dat};

	\nextgroupplot[
		width = 0.47\textwidth,
		height = 0.3\textwidth,
		ymin = -1, ymax = 1,
		title = Ridge subspace,
		xlabel = $i$,
		ylabel = $U_i$,
		xmax = 100,
		ylabel style = {yshift = -2ex},
		clip mode=individual,
		]
	\addplot[black, only marks, mark size = 1pt] table [x = i, y = Ui] {fig_pde_ridge_U.dat};
	\end{groupplot}
\end{tikzpicture}
&
\begin{tikzpicture}
\begin{axis}[ymode = log, xtick = {1e2,1e3,1e4},
	xmin = 100,
	xmax = 50000,
	ymin = 1e-2, 
	ymax = 2,
	xlabel = {samples, $M$},
	ylabel = {Monte Carlo $L_2$ mismatch estimate (normalized)},
	height = 0.61\textwidth,
	width = 0.5\textwidth,
	xmode = log,
	ylabel style = {yshift = -1ex},
	title = {Surrogate error},
	title style = {yshift = -2ex},
	xtick = {1e2, 1e3, 1e4, 5e4},
	xticklabels = {$10^2$, $10^3$, $10^4$, $5\cdot 10^4$},
	xlabel style = {yshift = 1ex},
	]
	\addplot[colorbrewerA5, thick] table [x = M, y = p50] {fig_pde_long_pra_1_3.dat}
		[anchor = south east] node [pos = 1, yshift = 1pt] {\tiny 1-D ridge $\mathbb{P}^3$ };
	\addplot[name path=pra1, draw = none] table [x = M, y = p25] {fig_pde_long_pra_1_3.dat};
	\addplot[name path=pra2, draw = none] table [x = M, y = p75] {fig_pde_long_pra_1_3.dat};
	\addplot[colorbrewerA5, opacity = 0.3] fill between [of = pra1 and pra2];

	\addplot[colorbrewerA1, thick] table [x = M, y = p50] {fig_pde_long_pra_1_7.dat}
		[anchor = north east] node [pos = 0.8, yshift = 0pt] {\tiny  1-D ridge $\mathbb{P}^7$ };
	\addplot[name path=pra1, draw = none] table [x = M, y = p25] {fig_pde_long_pra_1_7.dat};
	\addplot[name path=pra2, draw = none] table [x = M, y = p75] {fig_pde_long_pra_1_7.dat};
	\addplot[colorbrewerA1, opacity = 0.3] fill between [of = pra1 and pra2];

	\addplot[colorbrewerA2, thick] table [x = M, y = p50] {fig_pde_long_sp.dat}
		[anchor = south east] node [pos = 1, yshift = 1pt,xshift = 20pt] {\tiny LASSO $\mathbb{P}^3(\R^{100})$ };
	\addplot[name path=sp1, draw = none] table [x = M, y = p25] {fig_pde_long_sp.dat};
	\addplot[name path=sp2, draw = none] table [x = M, y = p75] {fig_pde_long_sp.dat};
	\addplot[colorbrewerA2, opacity = 0.3] fill between [of = sp1 and sp2];

	\addplot[colorbrewerA3, thick] table [x = M, y = p50] {fig_pde_long_line.dat}
		[anchor = south east] node [pos = 1, yshift = 1pt] {\tiny linear };
	\addplot[name path=line1, draw = none] table [x = M, y = p25] {fig_pde_long_line.dat};
	\addplot[name path=line2, draw = none] table [x = M, y = p75] {fig_pde_long_line.dat};
	\addplot[colorbrewerA3, opacity = 0.3] fill between [of = line1 and line2];

	\addplot[colorbrewerA4, thick] table [x = M, y = p50] {fig_pde_long_gp.dat}
		[anchor = south east] node [pos = 1, yshift = 0pt] {\tiny Gaussian process };
	\addplot[name path=gp1, draw = none] table [x = M, y = p25] {fig_pde_long_gp.dat};
	\addplot[name path=gp2, draw = none] table [x = M, y = p75] {fig_pde_long_gp.dat};
	\addplot[colorbrewerA4, opacity = 0.3] fill between [of = gp1 and gp2];

	\addplot[colorbrewerA7, thick] table [x = M, y = p50] {fig_pde_long_pra_2_7.dat}
		[anchor = south east] node [pos = 1, yshift = 25pt, xshift = 0pt] {\tiny 2-D ridge $\mathbb{P}^7$};
	\addplot[name path=pra1, draw = none] table [x = M, y = p25] {fig_pde_long_pra_2_7.dat};
	\addplot[name path=pra2, draw = none] table [x = M, y = p75] {fig_pde_long_pra_2_7.dat};
	\addplot[colorbrewerA7, opacity = 0.3] fill between [of = pra1 and pra2];

\end{axis}
\end{tikzpicture}
\end{tabular}

\caption{The top left plot shows the 1-D ridge function approximation (solid line)
along with the projected points $y_i = \ma U^\trans \ve x_i \in \R$ (dots).
The bottom left plot shows the entries in the matrix $\ma U \in \R^{100\times 1}$.
The right plot shows the estimated $L_2$ error using Monte Carlo integration for several different surrogate models
applied to the elliptic PDE model described in \cref{sec:examples:pde}
where the solid line indicates the median mismatch and the shaded region encloses the $25$th to $75$th percentile
mismatch from 100 fits with different random samples. 
}
\label{fig:pde}
\end{figure}
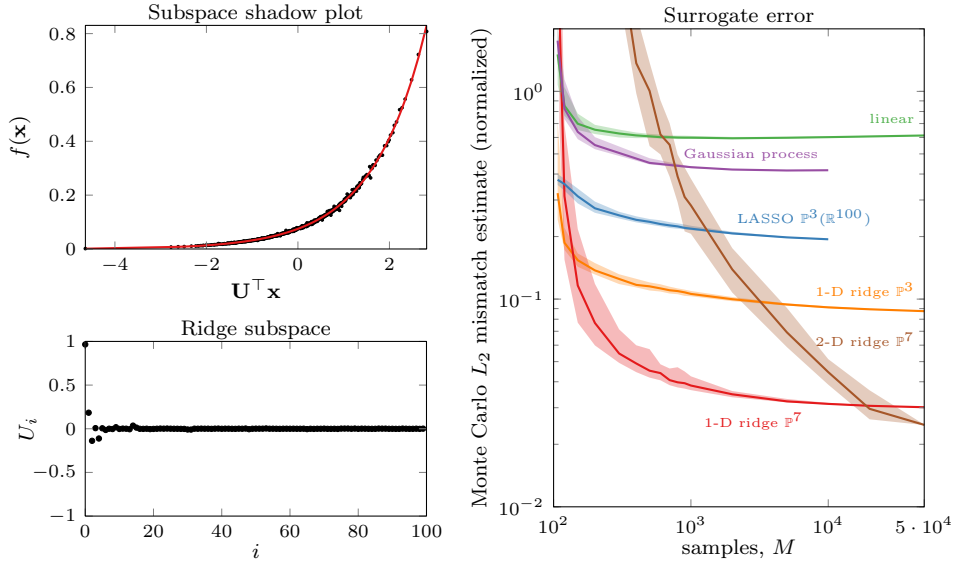
As a final demonstration of our algorithm, we consider a 2D elliptic PDE from~\cite{CDW14} with $m=100$ input parameters.
These parameters $\ve x\in \R^{100}$ characterize the coefficient $a = a(\ve s, \ve x)$
in the differential equation
\begin{equation}\label{eq:pde}
	-\nabla_\ve s\cdot(a \nabla_\ve s u) = 1,\quad \ve s \in [0,1]^2,
\end{equation}
where the inputs $x_j$ are the coefficients of a truncated Karhunen-Lo\'{e}ve expansion of $\log(a)$ with a correlation function,
\begin{equation}
\operatorname{Corr}(\ve s_1,\ve s_2) \;=\;
\exp\left(
\frac{-\| \ve s_1 - \ve s_2\|_1}{2\ell}
\right),
\end{equation}
where $\ell$ is a correlation length parameter. 
The boundary conditions are homogeneous Neumann conditions on the right boundary and 
homogeneous Dirichlet conditions on the other boundaries;
the quantity of interest is the spatial average of the solution $u$ on the right boundary. 
\Cref{fig:pde} shows the shadow plot for this quantity of interest,
which exhibits a strong, 1-D ridge structure where the coordinates of the active subspace spanned by $\ma U$
are approximately sparse.
As this figure illustrates, by exploiting the approximate ridge structure present in this quantity of interest,
we can form an accurate surrogate using relatively few samples.

\section{Summary and discussion}
Here we have derived a structure exploiting algorithm to efficiently solve
the data-driven polynomial ridge approximation problem~\cref{eq:pra_opt}.
The key feature of this algorithm is exploiting the separable structure
which allows us to optimize over the subspace alone by 
implicitly solving for the polynomial approximation using variable projection.
This allows our Gauss-Newton based optimization over the Grassmann manifold to display 
superior convergence properties compared to the alternating method of Constantine et al~\cite{CEHW17}
and allows us to reduce computational costs by exploiting the orthogonality properties of the Jacobian
revealed in \cref{thm:orth}.

This combination of variable projection and manifold optimization can likely used to accelerate optimization
of other surrogate model classes.
For example, we could replace the polynomial model of total degree $p$ with a tensor product spline model
which also yields a linear least squares problem to recover $g$.
However, as this spline model is not rotationally invariant, 
the optimization would have to be with respect to the Stiefel manifold.
Or, we could build $g$ using a Gaussian process model on the projected points $\lbrace\ve y_i = \ma U^\trans \ve x_i\rbrace_{i=1}^M$
and apply a regularization technique to ensure we do not obtain an interpolant for any choice of $\ma U$.
However in this case we could no longer use variable projection as the analog of $\ma V(\ma U)$ would be a square, full rank matrix.
Similar techniques could also be extended to the projection pursuit model~\cref{eq:ppr},
but then optimization would be over the product of $n$ one-dimensional Grassmann manifolds.

Beyond the scope of this work are two more fundamental questions about constructing surrogates,
and in particular, ridge approximations:
(i) how do we select the `best' choice of polynomial degree $p$ and subspace dimension $n$
and (ii) how do we choose our samples $\lbrace \ve x_i\rbrace_{i=1}^M\subset \set D$ 
to maximize the accuracy of our ridge approximation?
When $f(\ve x)$ contains random noise with a known distribution,
as is the case in statistical regression,
there are existing approaches to answer both these questions.
The hyperparameters $n$ and $p$ can be chosen using a number of techniques 
such as the Akaike information criterion (AIC)~\cite{Aka74}
or cross-validation~\cite{PC84}.
With this assumption of noise, we can also invoke traditional experimental design techniques
to choose points $\lbrace \ve x_i \rbrace_{i=1}^M$ that 
minimize the variance of our parameter estimates $\ma U$ and $g$~\cite[Ch.~6]{SWN03}.
However, the quantities of interest that often appear in uncertainty quantification 
do not have statistical noise but instead often display structured artifacts due to mesh discretizations and solver tolerances~\cite{MW11}.
But, as \cref{fig:speed} suggests, if a good sequential point selection heuristic can be determined,
this would enable the construction of a better ridge approximation with fewer function evaluations.
This remains an active area of research.

\section*{Acknowledgements}
The authors would like to thank Akil Narayan
for his suggestion to consider orthogonal polynomial bases,
such as the Legendre basis.

\bibliographystyle{siamplain}
\bibliography{abbrevjournals,master}
\end{document}